\colorlet{linkequation}{blue}
\newtheorem{thm}{Theorem}[section]
\newtheorem{defn}[thm]{Definition}
\newtheorem{rem}[thm]{Remark}
\newcommand{\A}{\mathcal{A}}
\newcommand{\E}{\mathbb{E}}
\newcommand{\R}{\mathbb{R}}
\newcommand{\K}{\mathcal{K}}
\newcommand{\F}{\mathcal{F}}
\renewcommand{\P}{\mathbb{P}}
\newcommand{\Scal}{\mathcal{S}}
\newcommand{\myparskip}{3pt}
\theoremstyle{plain}
\newtheorem{theorem}{Theorem}[section]
\newtheorem{lemma}[theorem]{Lemma}
\newtheorem{proposition}[theorem]{Proposition}
\newtheorem{claim}[theorem]{Claim}
\theoremstyle{definition}
\newtheorem{definition}[theorem]{Definition}
\title{The Convergence Rate of SGD's Final Iterate:\\ Analysis on Dimension Dependence}
\author{
  Daogao Liu\\
  University of Washington\\
  \texttt{dgliu@uw.edu} \\
  \and
  Zhou Lu\footnote{Alphabetical order. This work is done during both authors' visit at SQZ institution.}\\
  Princeton University \\
  \texttt{zhoul@princeton.edu} 
}
\date{June 2021}
\begin{document}

\maketitle

\begin{abstract}
Stochastic Gradient Descent (SGD) is among the simplest and most popular methods in optimization. The convergence rate for SGD has been extensively studied and tight analyses have been established for the running average scheme, but the sub-optimality of the final iterate is still not well-understood. 
\cite{shamir2013stochastic} gave the best known upper bound for the final iterate of SGD minimizing non-smooth convex functions, which is $O(\log T/\sqrt{T})$ for Lipschitz convex functions and $O(\log T/ T)$ with additional assumption on strongly convexity. The best known lower bounds, however, are worse than the upper bounds by a factor of $\log T$. \cite{harvey2019tight} gave matching lower bounds but their construction requires dimension $d= T$. It was then asked by \cite{koren2020open} how to characterize the final-iterate convergence of SGD in the constant dimension setting. 

In this paper, we answer this question in the more general setting for any $d\leq T$, proving $\Omega(\log d/\sqrt{T})$ and $\Omega(\log d/T)$ lower bounds for the sub-optimality of the final iterate of SGD in minimizing non-smooth Lipschitz convex and strongly convex functions respectively with standard step size schedules. 
Our results provide the first general dimension dependent lower bound on the convergence of SGD's final iterate, partially resolving a COLT open question raised by \cite{koren2020open}.
We also present further evidence to show the correct rate in one dimension should be $\Theta(1/\sqrt{T})$, such as a proof of a tight $O(1/\sqrt{T})$ upper bound for one-dimensional special cases in settings more general than \cite{koren2020open}.

\end{abstract}

\section{Introduction}
Stochastic gradient descent (SGD) is one of the oldest, simplest and most popular methods in optimization, dating back to \cite{robbins1951stochastic}. SGD works by iteratively takes a small step in the opposite direction of an unbiased estimate of sub-gradients, widely used in minimizing convex function $f$ over a convex domain $X$. Formally speaking, given a stochastic gradient oracle that for an input $x\in X$, SGD returns a random vector $\hat{g}$ whose expectation is equal to one of the sub-gradients of $f$ at $x$, and given an initial point $x_1$ generates a sequence of points $x_1,...,x_{T+1}$ according to the update rule
\begin{equation}
    x_{t+1}=\Pi_X(x_t-\eta_t \hat{g}_t)
\end{equation}
where $\Pi_X$ denotes projection onto $X$ and $\{\eta_t\}_{t\geq1}$ is a sequence of step sizes. 
Common choices of step sizes for convex functions are $\eta_t=1/\sqrt{t}$ for unknown horizon $T$ and $\eta_t=1/\sqrt{T}$ for known $T$, and $\eta_t=1/t$  for strongly convex functions. In both cases, it's known that the final-iterate convergence rate of SGD is optimal when $f$ is smooth \cite{nemirovski2009robust} or a running average scheme is employed. However, in practice the convex functions that arise are often non-smooth for example, in \cite{cohen2016geometric,lee2013new} and the final iterate is very often a more preferred choice than the running average as pointed out by \cite{shalev2011pegasos}. Nevertheless, the convergence rate of SGD's final iterate in the non-smooth setting is much less explored. Understanding this problem is important, because if the last iterate of SGD performs as good as the running average, it would yield a very simple, implementable and interpretable form of SGD. 

There is a line of works making attempts to understand the convergence rate of the final iterate of SGD. \cite{shamir2013stochastic} first established a near-optimal $O(\log T/\sqrt{T})$ convergence rate for the final iterate of SGD with a step size schedule $\eta_t=1/\sqrt{t}$. \cite{jain2019making} proved an information-theoretically optimal $O(1/\sqrt{T})$ upper bound using a rather non-standard step size schedule. \cite{harvey2019tight} gave an $\Omega(\log T/\sqrt{T})$ lower bound for the standard $\eta_t=1/\sqrt{t}$ step size schedule, but their construction requires the dimension $d$ to be equal to $T$, which is quite restrictive. A natural question arises:

\textbf{Question:} \emph{What's the dependence on dimension $d$ of the convergence rate of SGD's final iterate when $d\leq T$ is seen as a parameter?}

In a recent COLT open question raised by \cite{koren2020open}, the same question was posed but only for the more restrictive constant dimension setting. Moreover, they conjectured that the right convergence rate of SGD in the {\bf constant} dimensional case is $\Theta(1/\sqrt{T})$.
They analyzed a one-dimensional one-sided random walk special case as the preliminary evidence for their conjecture. However, this result is limited in the one-dimension setting for a special absolute-value function, thus can't be easily generalized. Analyzing the final-iterate convergence rate of SGD in general dimension for general convex functions is a more interesting and challenging question.
In particular, in \cite{koren2020open}, they wrote:

{\em For dimension $d>1$, a natural conjecture is that the right convergence rate is $\Theta(\log d/\sqrt{T})$, but we
have no indication to corroborate this.}

Motivated by this, we focus on analyzing the final iterate of SGD in general dimension $d\le T$ without smoothness assumptions in this paper.

Our first main result is an $\Omega(\log d/ T)$ lower bound for SGD minimizing strongly convex functions with a $\eta_t=1/t$ step size schedule when dimension $d\le T$, generalizing the result in \cite{harvey2019tight}. Our main observation is that we can let the initial point $x_1$ stay still for any number of steps as long as $\mathbf{0}$ is one of the sub-gradient of $f$ at $x_1$. By correctly modifying the original construction of \cite{harvey2019tight}, we can keep $x_1$ at $\mathbf{0}$ for $T-d$ steps and then 'kick' it to start taking a similar route as in \cite{harvey2019tight} in a $d$-dimensional space, which incurs an $\Omega(\log d/ T)$ sub-optimality since the logarithmic term is caused by the construction of $f$, not by taking the sum of step sizes. This result is then generalized to general Lipschitz convex functions, with either fixed $1/\sqrt{T}$ step size schedule or $1/\sqrt{t}$ decreasing step size schedule. Our lower bounds are actually proven for the sub-gradient descent method (GD), in a stronger form than SGD. The case of running SGD on strongly convex functions with fixed step sizes isn't considered because the fixed step size schedule is believed to be sub-optimal, therefore not commonly used. Roughly speaking, the sub-optimality with fixed step size $\eta$ is $O(\eta+(1-\eta)^T)$ according to \cite{bottou2018optimization}, and can't be tuned to the better rate $O(1/T)$ which can be attained by the $1/t$ step size schedule instead.

We also present further evidence on the upper bound for one dimensional special cases. Though seemingly easy, even the convergence rate of fixed-stepsize SGD for one-dimensional linear functions is open and non-trivial. \cite{koren2020open} considered minimizing a linear function with a restricted SGD oracle which only outputs $\pm 1$, reducing this problem to a one-sided random walk. We relax the restriction on the SGD oracle and prove an $O(1/\sqrt{T})$ optimal rate for (nearly) linear functions with the help of martingale theory. 
We also relax the linearity condition and prove the induced discrete random walk has $O(1/\sqrt{T})$ optimal rate in an asymptotic manner. These results serve as further evidence for the conjecture that the true rate of SGD in one dimension is $O(1/\sqrt{T})$.

Our contributions are summarized as follows:
\begin{itemize}
\setlength{\itemsep}{0pt}
    \item We prove an $\Omega(\log d/ T)$ lower bound for the sub-optimality of the final iterate of SGD minimizing non-smooth strongly convex functions with $\eta_t=1/ t$ step size schedule. We also prove an $\Omega(\log d/ \sqrt{T})$ lower bound for the sub-optimality of the final iterate of SGD minimizing non-smooth general Lipschitz convex functions with decreasing $\eta_t=1/\sqrt{t}$ step size schedule or $\eta_t=1/\sqrt{T}$ fixed step size schedule. Our results are the first, to the best of our knowledge, that characterize the general dimension dependence in analyzing the final iterate convergence of SGD.
    \item We also prove an optimal $O(1/\sqrt{T})$ upper bound for the sub-optimality of the final iterate of SGD minimizing Lipschitz convex functions with fixed $\Theta(1/\sqrt{T})$ step sizes in one dimension, under weaker assumptions than \cite{koren2020open}, that the function is (nearly) linear but allows the using of any legal oracle instead of a restricted one which only outputs $\pm 1$.
\end{itemize}

\begin{figure}[h]
\centering
\includegraphics[scale=.5]{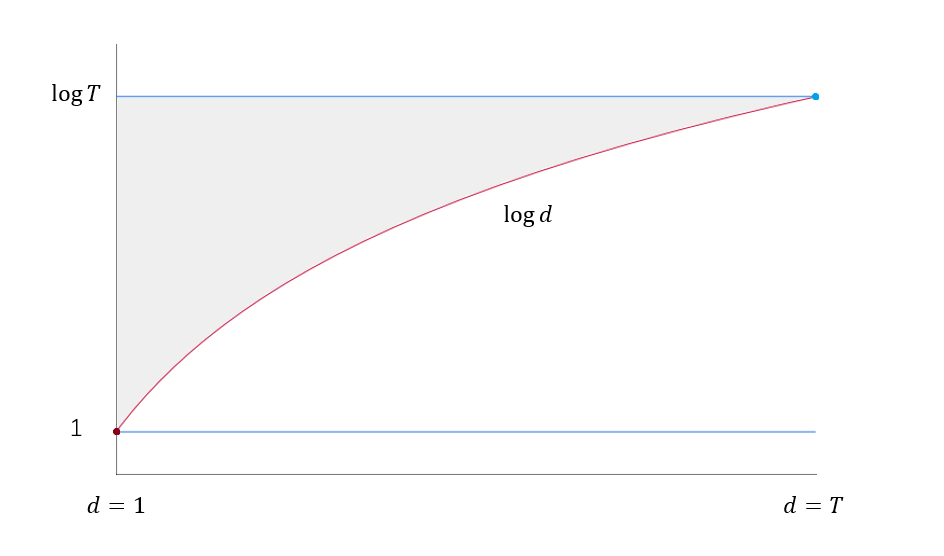}
\caption{The two blue lines denote previously known best upper/lower bounds for the sub-optimality of the final iterate of SGD, where we omit the $1/T, 1/\sqrt{T}$ terms. The blue dot on the upper right corner denotes the $O(\log T/ \sqrt{T})$ lower bound in \cite{harvey2019tight}, for the special case $d=T$. Our dimension-dependent lower bound is the red curve, with the red dot on the lower left corner denoting the tight upper bound for one-dimensional special cases. The grey region represents the unsolved upper bound. Our results partially solve the open question raised by \cite{koren2020open}, and provide further evidence that the true optimal rate is $\Theta(\log d/\sqrt{T})$.}
\end{figure}

\subsection{Related works}
Stochastic gradient descent (SGD) was first introduced by \cite{robbins1951stochastic}. It soon became one of the most popular tools in applied machine learning \cite{johnson2013accelerating, schmidt2017minimizing} due to its simplicity and effectiveness. Theoretical analysis on SGD usually adopts a running average step size schedule, which was first introduced by \cite{polyak1992acceleration} for optimal rates of convergence in the stochastic approximation setting. Optimal convergence rates have been achieved in both convex and strongly convex settings when averaging of iterates is used \cite{nemirovskij1983problem,zinkevich2003online,kakade2008generalization,cesa2004generalization}. The final iterate of SGD, though being a more preferred choice of step size schedule in practice, has not been very well studied from the theoretical perspective, and convergence results for the final iterate is rather scarce compared with the running average schedule.

\cite{shamir2013stochastic} first considered the question of the final iterate and gave a bound of $O(\log T/T)$ and $O(\log T/\sqrt{T})$ in expectation for the strongly convex case and Lipschitz case respectively, then high probability analogous upper bounds were provided in \cite{harvey2019tight}. However, there is still a $\log T$ gap from the optimal rate, and \cite{harvey2019tight} showed a matching lower bound implying the $\log T$ is inevitable. Nevertheless, their lower bound analysis relies on a construction with dimension $d=T$. \cite{jain2019making} used a sophisticated but non-standard step size schedule to achieve an optimal convergence rate for the final iterate of SGD.

Recently, \cite{koren2020open} asked if a dimension-dependent analysis can be made for the convergence of SGD's last iterate in the setting when $d$ is a constant. They conjectured that SGD in one dimension can achieve the optimal rate $O(1/\sqrt{T})$ using standard step size schedules. They also made a natural conjecture that the right convergence rate is $\Theta(\log d/\sqrt{T})$ for $d>1$, but with no indication to corroborate this. They considered an absolute value function in one dimension with fixed step size and a restricted oracle as the preliminary evidence for their first conjecture, by reducing SGD to a one sided random walk and using generating functions for analysis.

\begin{table}
\label{tab}
\centering
\begin{tabular}{|c|c|c|c|c|c|}
\hline 
Work & Rate & Method & Convexity &  Step size& Assumptions\\
\hline
\cite{nemirovski2009robust}& $O(1/T)$ & SGD & Strongly  & $1/t$& \\
\hline
\cite{jain2019making}& $O(1/\sqrt{T})$ & SGD & Convex  & Non-standard& \\
\hline
\cite{jain2019making}& $O(1/T)$ & SGD & Strongly & Non-standard& \\
\hline
\cite{shamir2013stochastic}& $O(\log T/\sqrt{T})$ & SGD & Convex  & $1/\sqrt{t}$& \\
\hline
\cite{shamir2013stochastic}& $O(\log T/T)$ & SGD & Strongly  & $1/t$& \\
\hline
\cite{harvey2019tight} & $\Omega(\log T/\sqrt{T})$ & GD & Convex  & $1/\sqrt{t}$& $d=T$ \\
\hline
\cite{harvey2019tight} & $\Omega(\log T/T)$ & GD & Strongly & $1/t$ & $d=T$\\
\hline
Ours & $\Omega(\log d/\sqrt{T})$ & GD & Convex  & $1/\sqrt{t}, 1/\sqrt{T}$& \\
\hline
Ours & $\Omega(\log d/T)$ & GD & Strongly & $1/t$ &\\
\hline
Ours & $O(1/T)$ & SGD & Special &  $1/\sqrt{T}$& $d=1$\\
\hline

\end{tabular}
\caption{Convergence results for the expected sub-optimality of the final iterate of SGD for minimizing non-smooth convex functions in various settings. GD denotes the sub-gradient descent method. Upper bounds of SGD automatically hold for GD, and lower bounds of GD hold for SGD as well since GD is a subset of SGD. Results for the running average scheme are not listed here for clarity of presentation.}
\end{table}

\subsection{Organization}
The settings and background knowledge are presented in Section 2. In Section 3 we prove the main $\Omega(\log d/T)$ lower bound for strongly convex functions with decreasing step sizes. We then extend this result to general Lipschitz convex functions with either decreasing or fixed step sizes. Section 4 focuses on one-dimensional special cases, proving a tight $O(1/\sqrt{T})$ upper bound for (nearly) linear functions (with any legal oracle). Section 5 concludes this paper.
\section{Preliminaries}
Let $X\subset \R^d$ be a closed and convex set, and a convex function $f: X\to \R$ defined on $X$, our goal is to solve $\min_{x\in X} f(x)$. In optimization, there is no explicit representation of $f$. Instead, we are allowed to use a stochastic oracle to query the sub-gradients of $f$ at $x$. The set $X$ is given in the form of a projection oracle, that outputs the closest point in $X$ to a given point $x$ in Euclidean norm. We introduce several standard definitions.

\begin{defn}[Sub-gradient]
A sub-gradient $g\in\R^d$ of a convex function $f: X\to \R$ at point $x$, is a vector satisfying that for any $x'\in X$, we have:
\begin{equation}
    f(x')-f(x)\ge g^{\top} (x'-x)
\end{equation}
we use $\partial f(x)$ to denote the set of all sub-gradients of $f$ at $x$.
\end{defn}

\begin{defn}[Strong Convexity]
A function $f: X\to \R$ is said to be $\alpha$-strongly convex, if for any $x,y\in X$ and $g\in \partial f(x)$, the following holds:
\begin{equation}
    f(y)-f(x)\ge g^{\top}(y-x) +\frac{\alpha}{2}||y-x||^2
\end{equation}
\end{defn}

\begin{defn}[Lipschitz Function]
A function $f: X\to \R$ is called $L$-Lipschitz (with respect to $\ell_2$ norm), if for any $x,y\in X$, we have that:
\begin{equation}
    |f(x)-f(y)|\le L||x-y||_2
\end{equation}
Further, if we assume $f$ is convex, the above definition is equal to $||g||_2\le L$ for any sub-gradient $g$.
\end{defn}

Let $\Pi_X$ denote the projection operator on $X$, the (projected) stochastic gradient descent (SGD) is given in Algorithm \ref{alg1}, in the most standard form except for the output. The choice of running average output enjoys optimal convergence rates \cite{polyak1992acceleration,rakhlin2011making,ruppert1988efficient}. However, the more popular choice in practice is simply using the final iterate as output.

There are also several choices for step size schedule $\eta_t$. The optimal choice of step size is known to be $\eta_t=1/t$ for strongly convex functions and $\eta_t=1/\sqrt{t}$ for Lipschitz convex functions when the horizon isn't (necessarily) known in advance. When $T$ is known, we can also choose $\eta_t=1/\sqrt{T}$ for Lipschitz convex functions.

\begin{algorithm}[h]
\caption{Stochastic gradient descent with the final iterate output} 
\begin{algorithmic}[1] \label{alg1}
\STATE Given $X \subset \R^d$, initial point $x_1\in X$, step size schedule $\eta_t$:
\FOR{$j=1,...,T$:}
\STATE Query stochastic gradient oracle at $x_t$ for $\hat{g}_t$ such that $\E[\hat{g}_t|\hat{g}_1,...,\hat{g}_{t-1}]\in \partial f(x_t)$
\STATE $y_{t+1}=x_t-\eta_t \hat{g}_t$
\STATE $x_{t+1}=\Pi_{X}(y_{t+1})$
\ENDFOR
\RETURN $x_{T+1}$
\end{algorithmic} 
\end{algorithm}

\section{Main lower bounds}
In this section we prove our main result, that the final iterate of SGD for non-smooth strongly convex functions has sub-optimality $\Omega(\log d/T)$, even in the non-stochastic case. We modify the construction used in \cite{harvey2019tight} which proves an $\Omega(\log T/T)$ lower bound for the special case $d=T$. In a nutshell, we consider the setting $d\le T$ and construct a function $f$ along with a special sub-gradient oracle such that the initial point will stay still for the first $T-d$ steps then start moving in Algorithm~\ref{alg1}, in which the final iterate satisfies $f(x_{T+1})=\Omega(\log d/T)$. Then we extend the analysis to Lipschitz convex functions.

\subsection{Strongly convex functions}

Let $[j]$ be the set of positive integers that are no more than $j$.
For simplicity, we consider function which is 3-Lipschitz and 1-strongly convex over the Euclidean unit ball.
For general Lipschitz and strongly convexity, it is easy to scale our construction and get corresponding lower bounds.

\begin{theorem}[Main Result]\label{thmain}
For any $T$ and $1\leq d\leq T$, there exists a convex function $f: X\to \R$ where $X\subset \R^d$ is the Euclidean unit ball, and $f$ is 3-Lipschitz and 1-strongly convex. When executing Algorithm \ref{alg1} on $f$ with initial point 0 (the global minimum) and step size schedule $\eta_t=1/t$, the final iterate satisfies:
\begin{equation}
    f(x_{T+1})-\min_{x\in X} f(x)\ge \frac{\log d}{5T}
\end{equation}
\end{theorem}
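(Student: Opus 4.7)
The plan is to adapt the $d=T$ lower bound construction of \cite{harvey2019tight} to arbitrary dimension $d \le T$ by exploiting the freedom of the sub-gradient oracle at the origin. The guiding observation is that if $\mathbf{0}$ is a sub-gradient of $f$ at the starting point, then GD can be made to ``stall'' there for as many steps as we wish without paying anything, so only the last $d$ steps need to carry the hard construction.

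First, I would construct a $1$-strongly convex, $3$-Lipschitz function $f: X \to \R$ on the Euclidean unit ball $X \subset \R^d$ of the form $f(x) = \tfrac{1}{2}\|x\|^2 + h(x)$, where $h$ is a max of affine pieces designed so that the origin is the global minimizer, $\mathbf{0} \in \partial h(\mathbf{0})$, and on a small neighbourhood of the trajectory produced by the adversarial oracle, $\partial h$ contains vectors that imitate the $d$-dimensional restriction of the sub-gradients used in \cite{harvey2019tight}. The quadratic term immediately supplies $1$-strong convexity, and Lipschitzness follows from bounding the slopes of the affine pieces of $h$.

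The argument then proceeds in two phases. In the \emph{stalling phase} $t = 1,\ldots,T-d$, since $\mathbf{0}\in \partial f(\mathbf{0})$, the oracle returns $\hat g_t = \mathbf{0}$ and the iterate remains at the origin. In the \emph{adversarial phase} $t = T-d+1,\ldots,T$, the oracle returns sub-gradients of the form $x_t + w_t$ with $w_t \in \partial h(x_t)$ chosen in Harvey-style fashion. Using the recursion $x_{t+1} = (1-\eta_t)x_t - \eta_t w_t$ with $\eta_t = 1/t$ and $x_{T-d+1}=\mathbf{0}$, a short induction yields the clean closed form
\begin{equation*}
x_{T+1} \;=\; -\frac{1}{T}\sum_{t = T-d+1}^{T} w_t ,
\end{equation*}
which already explains why the denominator of the final bound is $T$: the telescoped coefficient is the \emph{final} step index, not the number of active steps.

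The last step is to lower-bound $f(x_{T+1}) - f(\mathbf{0}) = \tfrac{1}{2}\|x_{T+1}\|^2 + h(x_{T+1})$ by $\log d /(5T)$. I would pick $h$ and the schedule $\{w_t\}$ so that $h\bigl(-\tfrac{1}{T}\sum_t w_t\bigr) = \Omega(\log d / T)$; the combinatorial argument from \cite{harvey2019tight}, applied to $d$ distinct active sub-gradients rather than $T$, produces a $\log d$ factor instead of a $\log T$ factor, while the $1/T$ scaling from the telescoping is untouched.

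The main obstacle is reconciling four constraints on $h$ simultaneously: (i) $h$ is $3$-Lipschitz on the unit ball; (ii) $\mathbf{0}\in \partial h(\mathbf{0})$, so the stalling phase is legal; (iii) $\partial h(x_t)$ actually contains the prescribed Harvey-style $w_t$ at each adversarial step; and (iv) the resulting value $h(x_{T+1})$ scales as $\Omega(\log d/T)$. Condition (ii) is the novelty relative to \cite{harvey2019tight}; the remaining conditions require a careful re-indexing of Harvey's construction onto $d$ coordinates and a verification that the logarithmic factor in the final bound comes from the \emph{number of distinct active affine pieces} along the trajectory (capped at $d$), rather than from summing $\eta_t$ over all iterations.
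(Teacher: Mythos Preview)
Your proposal is correct and follows essentially the same approach as the paper: the same $f(x)=\tfrac{1}{2}\|x\|^2+\max_i h_i^\top x$ construction with an added zero piece $h_0=\mathbf{0}$ enabling the stall at the origin for $T-d$ steps, then Harvey-style subgradients on the last $d$ steps. Your telescoped closed form $x_{T+1}=-\tfrac{1}{T}\sum_{t} w_t$ is a mild streamlining of the paper's explicit coordinate-by-coordinate induction on the iterates; the remaining checks you list (legality of each $w_t\in\partial h(x_t)$, plus the verification that all iterates remain in the unit ball so projection is vacuous) are exactly the ones the paper carries out.
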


\begin{proof}
Define $f:X \to \R$ and $h_i\in \R^d$ for $i\in[d+1] \cup \{0\}$ by
\begin{align*}
    f(x)=\max_{0\leq i\leq d+1}H_i(x)
\end{align*}
where $H_i(x)=h_i^\top x+\frac{1}{2}\|x\|^2$. For $i\ge 1$ we define
\begin{align*}
    h_{i, j}=\left\{\begin{array}{ll}
a_{j} & (\text { if } 1 \leq j<i) \\
-1 & (\text { if } i=j \leq d) \\
0 & (\text { if } i<j \leq d)
\end{array} \quad \text { and } \quad a_{j}=\frac{1}{2(d+1-j)} \quad(\text { for } j \in[d] )\right.
\end{align*}

Additionally, let $h_0=\mathbf{0}$ and $H_0(x)=\frac{1}{2}\|x\|^2$. It's easy to check that $f$ is 3-lipschitz and 1-strongly convex, with minimal value 0. We have the following standard claim \cite{hiriart2013convex}.

\begin{claim}
$\partial f(x)$ is the convex hull of $\{h_i+x\mid i\in {\cal I}(x)\}$ where ${\cal I}(x)=\{i\geq 0\mid H_i(x)=f(x)\}$.
\end{claim}

Our non-stochastic sub-gradient oracle outputs $\mathbf{0}$ for the first $T-d$ steps and outputs $h_{i'}+x$ where $i'=\min{\cal I}(x) \setminus \{0\}$ for the last $d$ steps. Define $z_1=\cdots =z_{T-d+1}=0$, let $T^*:=T-d$ and 
\begin{align*}
    z_{t,j}=\left\{\begin{array}{ll}
       \frac{1-(t-T^*-j-1)a_j}{t-1}  & (\text{ if } 1\leq j<t-T^*)  \\ 
        0 & (\text{ if } t-T^* \leq j \leq T)
    \end{array} \right. \quad (\text{ for } t>T^*+1).
\end{align*}

We will show inductively that these are precisely the first $T$ iterates produced by algorithm \ref{alg1} when using the sub-gradient oracle defined above. The following claim is easy to verify from definition.

\begin{claim}
We have the following claims:
\begin{itemize}
    \item $z_t$ is non-negative. In particular, $z_{t,j}\geq \frac{1}{2(t-1)}$ for $j<t-T^*$ and $z_{t,j}=0$ for $j\geq t-T^*$.
    \item $z_t=\mathbf{0}$ for $t\in [T^*+1]$ and $\|z_t\|^2\leq \frac{1}{t-1}$ for $t>T^*+1$. Thus $z_t\in X$ for all $t$.
\end{itemize}
\end{claim}

\begin{proof}
The first claim simply follows from the fact that $\frac{t-T^*-j-1}{d-j+1}\le 1$. The second claim follows from that $(t-T^*-1)\frac{1}{(t-1)^2} \le \frac{1}{t-1}$.
\end{proof}

We can now determine the value and sub-differential at $z_t$. The case for the first $T^*$ steps is trivial as the sub-gradient oracle always outputs $\mathbf{0}$ and $x_1$ never moves. For the last $d$ steps we have the following claim.
\begin{claim}
$f(z_t)=H_{t-T^*}(z_t)$ for all $T^*<t\leq T+1$. The subgradient oracle for $f$ at $z_t$ returns the vector $h_{t-T^*}+z_t$.
\end{claim}

\begin{proof}
We claim that $h_{t-T^*}^\top z_t=h_{i-T^*}^\top z_t$ for all $i>t>T^*$. 
By definition, $z_t$ is supported on its first $t-T^*$ coordinates, completing the first part of the claim. Next we claim that $z_t^\top h_{t-T^*}> z_t^\top h_{i-T^*}$ for all $T^*+1\leq i <t$. For $T^*\leq i<t$, one has
\begin{align*}
    z_t^\top (h_{t-T^*}-h_{i-T^*})=\sum_{j=i}^{t-1}z_{t,j}(h_{t-T^*,j}-h_{i-T^*,j})=z_{t,i}(a_i+1)+\sum_{j=i+1}^{t-1}z_{t,j}a_j>0.
\end{align*}

The two claims guarantee that $f(z_t)=H_{t-T^*}(z_t)$. Combining with the fact ${\cal I}(z_t)=\{t-T^*,...,d+1\}$, we conclude that the sub-gradient oracle outputs $h_{t-T^*}+z_t$.
\end{proof}

\begin{lemma}
For the function constructed in this section, the solution of $t$-th step in Algorithm \ref{alg1} equals to $z_t$ for every $T^*<t\leq T+1$.
\end{lemma}

\begin{proof}
We prove this lemma by induction.
For base case $t=T^*+1$, we know that $z_t=-\eta_t h_1$ holds.
Thus,
\begin{align*}
    y_{t+1,j}=& z_{t,j}-\frac{1}{t}(h_{t-T^*,j}+z_{t,j})\\
             =& \frac{t-1}{t} \left\{ \begin{array}{ll}
                 \frac{1-(t-T^*-j-1)a_j}{t-1} & (\text{ for }1\leq j<t-T^*)  \\
                 0 & (\text{ for } j\geq t-T^*)
             \end{array} \right\}
             -\frac{1}{t}\left\{
\begin{array}{ll}
a_{j} & (\text { if } 1 \leq j<t-T^*) \\
-1 & (\text { if } t-T^*=j \leq d) \\
0 & (\text { if } t-T^*<j \leq d)
\end{array}\right\}\\
    =& \frac{1}{t} \left\{ \begin{array}{ll}
                 1-(t-T^*-j-1)a_j & (\text{ for }1\leq j<t-T^*)  \\
                 0 & (\text{ for } j\geq t-T^*)
             \end{array} \right\}
             -\frac{1}{t}\left\{
\begin{array}{ll}
a_{j} & (\text { if } 1 \leq j<t-T^*) \\
-1 & (\text { if } t-T^*=j \leq d) \\
0 & (\text { if } t-T^*<j \leq d)
\end{array}\right\}\\
    =& \left\{\begin{array}{ll}
        \frac{1-(t-T^*-j)a_j}{t} & (\text{ for } j<t-T^*)  \\
        \frac{1}{t} & (\text{ for }j=t-T^*)\\
        0 & (\text{ fro } j>t-T^*)
    \end{array}\right\}.
\end{align*}
So $y_{t+1}=z_{t+1}$. Since $z_{t+1}\in X$, we have that $x_{t+1}=z_{t+1}$.

\end{proof}

From the above claim we have that the vector $x_t$ in algorithm \ref{alg1} is equal to $z_t$ for $t\in [T+1]$, which allows determination of the value of the final iterate:
\begin{equation}\label{eq2}
    f(x_{T+1})=f(z_{T+1})=H_{d+1}(z_{T+1})\geq \sum_{j=1}^{d}h_{d+1,j}z_{T+1,j}\geq \sum_{j=1}^{d} \frac{1}{2(d+1-j)}\frac{1}{2T}>\frac{\log d}{5T}.
\end{equation}
\end{proof}

\begin{rem}
For the case $d=1$ we still have the $\Omega(1/T)$ lower bound, by not using $\sum_{i=1}^d \frac{1}{i}> \log d$ in the last step of equation \ref{eq2}.
\end{rem}

Theorem \ref{thmain} improves the previously known lower bound by a factor of $\log d$, implying an inevitable dependence on dimension of the convergence of SGD's final iterate. In other words, the final iterate of SGD performs worse than the running average by a factor of $\log d$ in convergence rate, combining with the upper bounds in \cite{nemirovski2009robust}. Our result is the first general dimension-dependent analysis, to the best of our knowledge, for the convergence rate of SGD's final iterate. Next we extend this result to Lipschitz convex functions.

\subsection{Lipschitz convex functions with $1/\sqrt{t}$ step sizes}
In this subsection we prove that the final iterate of SGD for non-smooth lipschitz convex functions has sub-optimality $\Omega(\log d/\sqrt{T})$. The setting and approach is similar to the strongly convex case except that we use a $\eta_t=1/\sqrt{t}$ step size schedule instead. Without loss of generality we consider only 1-lipschitz convex functions. 


\begin{theorem}
For any $T$ and $1\le d\leq T$, there exists a convex function $f: X\to \R$ where $X\subset \R^d$ is the Euclidean unit ball, and $f$ is 1-lipschitz. When executing algorithm \ref{alg1} on $f$ with initial point 0 and step size schedule $\eta_t=1/\sqrt{t}$, the last iterate satisfies:
\begin{equation}
    f(x_{T+1})-\min_{x\in X} f(x)\ge \frac{\log d}{32\sqrt{T}}
\end{equation}
\end{theorem}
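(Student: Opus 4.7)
The plan is to mirror the construction of Theorem~\ref{thmain}, replacing the quadratically regularized max function by a purely piecewise-linear one. Concretely, I would define $f(x) = \max_{0 \le i \le d+1} h_i^\top x$ on the Euclidean unit ball, with $h_0 = \mathbf{0}$ (so that the origin is a minimizer with $f(0)=0$), and for $i \in [d+1]$ use the same triangular template
$h_{i,j} = a_j$ if $j < i$, $h_{i,j}= -1$ if $j = i \le d$, and $h_{i,j} = 0$ if $i < j \le d$,
but with the $a_j$ re-tuned to match $\eta_t = 1/\sqrt{t}$ rather than $\eta_t = 1/t$. A natural target is $a_j = \Theta\!\bigl(1/\sqrt{(d-j+1)T}\bigr)$, chosen so that $\|h_i\|_2 \le 1$ uniformly, making $f$ 1-Lipschitz. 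The adversarial (non-stochastic) oracle will return $\mathbf{0} \in \partial f(\mathbf{0})$ for the first $T^* := T-d$ steps and then $h_k$ at step $T^* + k$ for $k\in [d]$, exactly paralleling Section~3.1.

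Since there is no strong-convex shrinkage, the update simplifies to $z_{t+1} = z_t - \eta_t h_{t-T^*}$ once $t > T^*$, giving the closed form $z_{T^*+k+1} = -\sum_{j=1}^{k} \tfrac{1}{\sqrt{T^*+j}}\, h_j$, which is supported on its first $k$ coordinates and positive there. Two inductive claims, entirely parallel to the ones in the proof of Theorem~\ref{thmain}, are then required: (i) $\|z_t\|_2 \le 1$, so the projection $\Pi_X$ is inactive; and (ii) $h_k^\top z_{T^*+k} > h_i^\top z_{T^*+k}$ for every $0 \le i < k$, so that $h_k$ is a legal sub-gradient. As before, (ii) reduces to a positivity inequality of the form $z_{T^*+k, i}(a_i + 1) + \sum_{j>i} z_{T^*+k, j} a_j > 0$, which is immediate from the coordinate signs guaranteed by (i).

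The final estimate comes from the outer linear $H_{d+1}$:
$f(z_{T+1}) \ge H_{d+1}(z_{T+1}) = \sum_{j=1}^d a_j\, z_{T+1,j}$,
and the bound $\log d/(32\sqrt{T})$ should pop out as follows. Because $\eta_{T^*+j} = 1/\sqrt{T^*+j} = \Theta(1/\sqrt{T})$ uniformly in $j\in [d]$, each coordinate $z_{T+1,j}$ ends up of order $1/\sqrt{T}$ after the cancellation between the $-1$-diagonal term and the $a_j$-tails, and then the choice $a_j \propto 1/\sqrt{d-j+1}$ makes the inner product $\sum_j a_j\, z_{T+1,j}$ telescope into $\Omega(\log d/\sqrt{T})$ by the usual $\sum_{j=1}^d 1/(d-j+1) \ge \log d$ step at the end of equation~(\ref{eq2}).

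The main obstacle will be calibrating the $a_j$'s so that three constraints hold simultaneously with a single explicit constant: Lipschitzness $\|h_i\|_2 \le 1$ for all $i$, iterates remaining in the unit ball (so projection never fires, unlike the strongly convex case where $\|z_t\|^2 \le 1/(t-1)$ came for free from contraction), and the coordinate sign pattern supporting the inductive sub-gradient selection. Since without strong-convex contraction $\|z_t\|$ could a priori grow like $\sum_{j=1}^{t-T^*} 1/\sqrt{T^*+j}$, controlling it requires exploiting the near-orthogonality of the $h_j$'s: the tails $a_j$ are small enough that $\sum_{i,j} h_i^\top h_j/\sqrt{(T^*+i)(T^*+j)}$ stays bounded, the same cancellation that ultimately produces the $1/32$ constant in the stated bound.
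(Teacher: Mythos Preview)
Your plan has the right high-level architecture (stay at the origin for $T-d$ steps via the $h_0=\mathbf{0}$ sub-gradient, then run a Harvey-type piecewise-linear adversary for $d$ steps), but the concrete calibration you propose does not work, and the paper's proof hinges on a trick you are missing.

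The decisive error is keeping $-1$ on the diagonal of $h_i$. With the purely linear $f$ and step sizes $\eta_t=1/\sqrt{t}$, your closed form gives
\[
z_{T^*+k+1,\,j} \;=\; \frac{1}{\sqrt{T^*+j}} \;-\; a_j\sum_{l=j+1}^{k}\frac{1}{\sqrt{T^*+l}}\qquad(1\le j\le k),
\]
so the leading term is $1/\sqrt{T^*+j}$, not $\Theta(1/\sqrt{T})$. Your claim that $\eta_{T^*+j}=\Theta(1/\sqrt{T})$ ``uniformly in $j$'' is false exactly in the regime you must cover: when $d=\Theta(T)$ one has $T^*+1=T-d+1=O(1)$ and the first coordinate is $\Theta(1)$. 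With your $a_j$ the tail correction is far too small to compensate, so $\|z_{T+1}\|^2\approx\sum_{j=1}^{d}\frac{1}{T^*+j}\approx\log d$, the projection $\Pi_X$ fires, and the entire inductive identification $x_t=z_t$ collapses. The ``near-orthogonality'' heuristic in your last paragraph does not rescue this: the dominant contribution to $\|z_t\|^2$ comes from the diagonal $-1$'s, not from the cross terms.

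The paper fixes this by replacing $-1$ on the diagonal with a \emph{varying} entry $-b_i$ where $b_i=\tfrac{\sqrt{i+T^*}}{2\sqrt{T}}$, chosen precisely so that $b_j/\sqrt{j+T^*}=1/(2\sqrt{T})$ for every $j$. This normalizes every coordinate's leading term to exactly $1/(2\sqrt{T})$, after which one can keep the \emph{same} $a_j\propto 1/(d+1-j)$ as in the strongly convex case (not $1/\sqrt{(d-j+1)T}$ or $1/\sqrt{d-j+1}$ as you propose). A short integral bound shows the tail $a_j\sum_k 1/\sqrt{k}\le 1/(4\sqrt{T})$, so each coordinate sits in $[\tfrac{1}{4\sqrt{T}},\tfrac{1}{2\sqrt{T}}]$; this simultaneously yields $\|z_t\|\le 1$ (projection inactive), the correct sub-gradient selection, and the final harmonic sum $\sum_j a_j z_{T+1,j}\ge \sum_j \tfrac{1}{8(d+1-j)}\cdot\tfrac{1}{4\sqrt{T}}>\tfrac{\log d}{32\sqrt{T}}$. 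Note also that your suggested $a_j\propto 1/\sqrt{d-j+1}$ would produce $\sum_j 1/\sqrt{d-j+1}\approx 2\sqrt{d}$, not the harmonic $\sum_j 1/(d-j+1)\ge\log d$ you invoke at the end, so even your final accounting does not match your own choice of $a_j$.
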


\begin{proof}
Define $f:X \to \R$ and $h_i\in \R^d$ for $i\in[d+1] \cup \{0\}$ by
\begin{align*}
    f(x)=\max_{0\leq i\leq d+1}H_i(x)
\end{align*}
where $H_i(x)=h_i^\top x$. For $i\ge 1$ we define
\begin{align*}
    h_{i, j}=\left\{\begin{array}{ll}
a_{j} & (\text { if } 1 \leq j<i) \\
-b_i & (\text { if } i=j \leq d) \\
0 & (\text { if } i<j \leq d)
\end{array} \quad \text { and } \quad a_{j}=\frac{1}{8(d+1-j)},\quad b_j=\frac{\sqrt{j+T-d}}{2\sqrt{T}} \quad(\text { for } j \in[d] )\right.
\end{align*}

Additionally, let $h_0=\mathbf{0}$ and $H_0(x)=0$. It's easy to check that $f$ is 1-Lipschitz, with minimal value 0. We have the following standard claim.

\begin{claim}
$\partial f(x)$ is the convex hull of $\{h_i\mid i\in {\cal I}(x)\}$ where ${\cal I}(x)=\{i\geq 0\mid H_i(x)=f(x)\}$.
\end{claim}

Our non-stochastic sub-gradient oracle outputs $\mathbf{0}$ for the first $T-d$ steps and outputs $h_{i'}$ where $i'=\min{\cal I}(x) \setminus \{0\}$ for the last $d$ steps. Define $z_1=\cdots =z_{T-d+1}=\mathbf{0}$, let $T^*=:T-d$.

\begin{align*}
    z_{t,j}=\left\{\begin{array}{ll}
       \frac{b_j}{\sqrt{j+T^*}}-a_j\sum_{k=j+T^*+1}^{t-1}\frac{1}{\sqrt{k}}   & (\text{ if } 1\leq j< t-T^*) \\ 
        0 & (\text{ if } t-T^* \leq j \leq d)
    \end{array} \right. \quad (\text{ for } t>T^*+1).
\end{align*}

We will show inductively that these are precisely the first $T$ iterates produced by algorithm \ref{alg1} when using the sub-gradient oracle defined above. The following claim is obvious from definition.

\begin{claim}
We have the following claims:
\begin{itemize}
    \item $z_t$ is non-negative. In particular, $z_{t,j}\geq \frac{1}{4\sqrt{T}}$ for $j<t-T^*$ and $z_{t,j}=0$ for $j\geq t-T^*$.
    \item $z_{t,j}\leq \frac{1}{2\sqrt{T}}$ ofr all $j$. In particular, $z_t\in X$.
\end{itemize}
\end{claim}
\begin{proof}
It is obvious that $z_{t,j}=0$ for $j\geq t-T^*$ from the definition.
As $\frac{b_j}{\sqrt{j+T^*}}=\frac{1}{2\sqrt{T}}$, it suffices to prove that $0\leq a_j\sum_{k=j+T^*}^{t-1}\frac{1}{\sqrt{k}}\leq \frac{1}{4\sqrt{T}}$. We have that 
\begin{equation}
    0\leq \sum_{k=j+T^*}^{t-1}\frac{1}{\sqrt{k}}\le \int_{j+T^*-1}^{t-1} \frac{1}{\sqrt{x}} \mathrm{d} x=\frac{2(t-j-T^*)}{\sqrt{t-1}+\sqrt{j+T^*-1}}\le \frac{2(t-j-T^*)}{\sqrt{t-1}}
\end{equation}
and further $\frac{t-j-T^*}{\sqrt{t-1}}\le \frac{T+1-j-T^*}{\sqrt{T}}=\frac{d+1-j}{\sqrt{T}}$ by monotony.
Thus $0\leq a_j\sum_{k=j+T^*}^{t-1}\frac{1}{\sqrt{k}}\leq \frac{1}{4\sqrt{T}}$ follows from the definition of $a_j$.

\end{proof}

We can now determine the value and sub-differential at $z_t$. The case for the first $T^*$ steps is trivial as the sub-gradient oracle always outputs 0 and $x_1$ never moves a bit. For the last $d$ steps we have the following claim.
\begin{claim}
$f(z_t)=H_{t-T^*}(z_t)$ for all $T^*<t\leq T+1$. The sub-gradient oracle for $f$ at $z_t$ returns the vector $h_t$.
\end{claim}

\begin{proof}
We know that $h_{t-T^*}^\top z_t=h_{i-T^*}^\top z_t$ for all $i>t>T^*$. 
By definition, $z_t$ is supported on its first $t-T^*$ coordinates, completing the first part of the claim. Next we claim that $z_t^\top h_{t-T^*}> z_t^\top h_{i-T^*}$ for all $T^*+1\leq i <t$. For $T^*+1\leq i<t$, one has
\begin{align*}
    z_t^\top (h_{t-T^*}-h_{i-T^*})=\sum_{j=i-T^*}^{t-T^*}z_{t,j}(h_{t-T^*,j}-h_{i-T^*,j})=z_{t,i}(a_i+1)+\sum_{j=i+1}^{t-1}z_{t,j}a_j>0.
\end{align*}

The two claims guarantee that $f(z_t)=H_{t-T^*}(z_t)$. Combining with the fact ${\cal I}(z_t)=\{t-T^*,...,d+1\}$, we conclude that the sub-gradient oracle outputs $h_{t-T^*}$.
\end{proof}

\begin{lemma}
For the function constructed in this section, the solution of $t$-th step in algorithm \ref{alg1} equals to $z_t$ for every $T^*<t\leq T+1$.
\end{lemma}

\begin{proof}
We prove this lemma by induction.
For base case $t=T^*+1$, we know that $z_t=-\eta_t h_1$ holds.
Thus,
\begin{align*}
    y_{t+1,j}=& z_{t,j}-\frac{1}{\sqrt{t}}h_{t-T^*,j}\\
             =& \left\{ \begin{array}{ll}
                 \frac{b_j}{\sqrt{j+T^*}}-a_j\sum_{k=j+T^*}^{t-1}\frac{1}{\sqrt{k}} & (\text{ for }1\leq j<t-T^*)  \\
                 0 & (\text{ for } j\geq t-T^*)
             \end{array} \right\}
             -\frac{1}{\sqrt{t}}\left\{
\begin{array}{ll}
a_{j} & (\text { if } 1 \leq j<t-T^*) \\
-b_i & (\text { if } t-T^*=j \leq d) \\
0 & (\text { if } t-T^*<j \leq d)
\end{array}\right\}\\
    =& \left\{\begin{array}{ll}
        \frac{b_j}{\sqrt{j+T^*}}-a_j\sum_{k=j+T^*}^{t}\frac{1}{\sqrt{k}} & (\text{ for } j<t-T^*)  \\
        \frac{b_t}{\sqrt{t}}=\frac{b_t}{\sqrt{j+T^*}} & (\text{ for }j=t-T^*)\\
        0 & (\text{ fro } j>t-T^*)
    \end{array}\right\}.
\end{align*}
So $y_{t+1}=z_{t+1}$. Since $z_{t+1}\in X$, we have that $x_{t+1}=z_{t+1}$.
\end{proof}

From the above claim we have that the vector $x_t$ in algorithm \ref{alg1} is equal to $z_t$ for $t\in [T+1]$, which allows determination of the value of the final iterate:
\begin{align*}
    f(x_{T+1})=f(z_{T+1})=H_{d+1}(z_{T+1})\geq \sum_{j=1}^{d}h_{d+1,j}z_{T+1,j}\geq \sum_{j=1}^{d} \frac{1}{8(d+1-j)}\frac{1}{4\sqrt{T}}>\frac{\log d}{32\sqrt{T}}.
\end{align*}
\end{proof}
\subsection{Lipschitz convex functions with $1/\sqrt{T}$ step sizes}
In this section we prove that the final iterate of SGD for non-smooth Lipschitz convex functions has sub-optimality $\Omega(\log d/\sqrt{T})$ when a fixed $\eta_t=1/\sqrt{T}$ step size schedule is adopted.

\begin{theorem}
For any $T$ and $1\le d\leq T$, there exists a convex function $f: X\to \R$ where $X\subset \R^d$ is the Euclidean unit ball, and $f$ is 1-Lipschitz. When executing algorithm \ref{alg1} on $f$ with initial point 0 and step size schedule $\eta_t=1/\sqrt{T}$, the last iterate satisfies:
\begin{equation}
    f(x_{T+1})-\min_{x\in X} f(x)\ge \frac{\log d}{32\sqrt{T}}
\end{equation}
\end{theorem}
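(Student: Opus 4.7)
The plan is to mirror the construction of the previous theorem (the $\eta_t = 1/\sqrt{t}$ case) almost verbatim, exploiting the fact that the constant step size $\eta := 1/\sqrt{T}$ removes the need to make $b_j$ time-dependent. I would take $f(x) = \max_{0 \le i \le d+1} H_i(x)$ with $H_i(x) = h_i^\top x$, $H_0 \equiv 0$, and
\[
h_{i,j} = \begin{cases} a_j & \text{if } 1 \le j < i,\\ -b_i & \text{if } j = i \le d,\\ 0 & \text{if } i < j \le d,\end{cases}\qquad a_j := \frac{1}{8(d+1-j)},\qquad b_j := \tfrac{1}{2}.
\]
A direct bound $\|h_i\|^2 \le \sum_{j} a_j^2 + 1/4 < 1$ shows that $f$ is $1$-Lipschitz with $\min_X f = H_0(\mathbf{0}) = 0$.

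Set $T^* := T - d$. The deterministic sub-gradient oracle returns $\mathbf{0}$ on the first $T^*$ iterations (valid since $\mathbf{0} \in \partial f(\mathbf{0})$) and afterwards returns $h_{i'}$ with $i' = \min \mathcal{I}(x) \setminus \{0\}$. I would define candidate iterates by $z_t = \mathbf{0}$ for $t \le T^*+1$ and, for $t > T^*+1$,
\[
z_{t,j} = \begin{cases} \eta\bigl(b_j - (t - T^* - j - 1)\, a_j\bigr) & \text{if } 1 \le j < t-T^*,\\ 0 & \text{if } t-T^* \le j \le d.\end{cases}
\]
With the stated constants one checks $\eta/4 \le z_{t,j} \le \eta/2$ whenever $z_{t,j} > 0$, so $\|z_t\|^2 \le d\eta^2/4 \le 1/4$ and no projection is ever triggered.

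The induction that $x_t = z_t$ then requires, at each $t > T^*+1$, (i) that the oracle actually outputs $h_{t-T^*}$ at $z_t$ and (ii) that the update $z_t - \eta h_{t-T^*}$ matches $z_{t+1}$ coordinate-wise. For (i), $z_t$ is supported on its first $t-T^*-1$ coordinates, so all $H_i(z_t)$ with $i \ge t-T^*$ take the common value $\sum_{j < t-T^*} a_j z_{t,j} > 0$, and for $T^* < i < t$ the identity
\[
z_t^\top\bigl(h_{t-T^*} - h_{i-T^*}\bigr) = z_{t,\, i-T^*}\bigl(a_{i-T^*} + b_{i-T^*}\bigr) + \sum_{j = i-T^*+1}^{t-T^*-1} z_{t,j}\, a_j > 0
\]
forces $\mathcal{I}(z_t) = \{t-T^*,\ldots,d+1\}$ and hence $i' = t - T^*$. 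Part (ii) is routine bookkeeping, after which
\[
f(x_{T+1}) \ge H_{d+1}(z_{T+1}) = \sum_{j=1}^d a_j\, z_{T+1,j} \ge \sum_{j=1}^d \frac{1}{8(d+1-j)}\cdot \frac{1}{4\sqrt{T}} \ge \frac{\log d}{32\sqrt{T}}.
\]

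The main obstacle is purely constant-juggling: one must pick $a_j$ and $b_j$ so that simultaneously (a) $f$ is $1$-Lipschitz, (b) each $z_t$ stays inside the unit ball, (c) the oracle selects $i' = t - T^* \ne 0$ at every step, and (d) the harmonic sum $\sum_j 1/(d+1-j) \ge \log d$ survives in the final bound multiplied by an absolute constant times $1/\sqrt{T}$. Because $\eta$ is no longer time-varying, these four constraints decouple cleanly and the argument is strictly simpler than the $\eta_t = 1/\sqrt{t}$ case proved just above; no new idea is needed beyond recognizing that the time-dependent $b_j$ used there can be collapsed to a single constant.
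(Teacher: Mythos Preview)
Your proposal is correct and essentially identical to the paper's own proof: the same function $f=\max_i H_i$, the same constants $a_j=\tfrac{1}{8(d+1-j)}$ and $b_j=\tfrac{1}{2}$, the same oracle that idles for $T-d$ steps and then returns $h_{t-T^*}$, the same closed-form iterates $z_{t,j}=\eta\bigl(b_j-(t-T^*-j-1)a_j\bigr)$, and the same final harmonic-sum estimate. Your indexing in the comparison $z_t^\top(h_{t-T^*}-h_{i-T^*})$ is in fact slightly cleaner than the paper's, which has a minor subscript slip there.
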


\begin{proof}
Define $f:X \to \R$ and $h_i\in \R^d$ for $i\in[d+1] \cup \{0\}$ by
\begin{align*}
    f(x)=\max_{0\leq i\leq d+1}H_i(x)
\end{align*}
where $H_i(x)=h_i^\top x$. For $i\ge 1$ we define
\begin{align*}
    h_{i, j}=\left\{\begin{array}{ll}
a_{j} & (\text { if } 1 \leq j<i) \\
-b_i & (\text { if } i=j \leq d) \\
0 & (\text { if } i<j \leq d)
\end{array} \quad \text { and } \quad a_{j}=\frac{1}{8(d+1-j)},\quad b_j=\frac{1}{2} \quad(\text { for } j \in[d] )\right.
\end{align*}

Additionally, let $h_0=0$ and $H_0(x)=0$. It's easy to check that $f$ is 1-Lipschitz, with minimal value 0. We have the following standard claim.

\begin{claim}
$\partial f(x)$ is the convex hull of $\{h_i\mid i\in {\cal I}(x)\}$ where ${\cal I}(x)=\{i\geq 0\mid H_i(x)=f(x)\}$.
\end{claim}

Our non-stochastic sub-gradient oracle outputs 0 for the first $T-d$ steps and outputs $h_{i'}$ where $i'=\min{\cal I}(x) \setminus \{0\}$ for the last $d$ steps. Define $z_1=\cdots =z_{T-d+1}=0$, let $T^*=:T-d$.
\begin{align*}
    z_{t,j}=\left\{\begin{array}{ll}
       \frac{b_j}{\sqrt{T}}-a_j\frac{t-j-T^*-1}{\sqrt{T}}   & (\text{ if } 1\leq j< t-T^*) \\ 
        0 & (\text{ if } t-T^* \leq j \leq d)
    \end{array} \right. \quad (\text{ for } t>T^*+1).
\end{align*}

We will show inductively that these are precisely the first $T$ iterates produced by algorithm \ref{alg1} when using the sub-gradient oracle defined above. The following claim is obvious from definition.

\begin{claim}
We have the following claims:
\begin{itemize}
    \item $z_t$ is non-negative. In particular, $z_{t,j}\geq \frac{1}{4\sqrt{T}}$ for $j<t-T^*$ and $z_{t,j}=0$ for $j\geq t-T^*$.
    \item $z_{t,j}\leq \frac{1}{2\sqrt{T}}$ ofr all $j$. In particular, $z_t\in X$.
\end{itemize}
\end{claim}
\begin{proof}
It is obvious that $z_{t,j}=0$ for $j\geq t-T^*$ from the definition.
As $\frac{b_j}{\sqrt{T}}=\frac{1}{2\sqrt{T}}$, it suffices to prove that $0\leq a_j\frac{t-j-T^*-1}{\sqrt{T}}\leq \frac{1}{4\sqrt{T}}$, which is direct as $0\leq t-j-T^*-1\le d+1-j$.

\end{proof}

We can now determine the value and sub-differential at $z_t$. The case for the first $T^*$ steps is trivial as the sub-gradient oracle always outputs 0 and $x_1$ never moves a bit. For the last $d$ steps we have the following claim.
\begin{claim}
$f(z_t)=H_{t-T^*}(z_t)$ for all $T^*<t\leq T+1$. The sub-gradient oracle for $f$ at $z_t$ returns the vector $h_t$.
\end{claim}

\begin{proof}
We know that $h_{t-T^*}^\top z_t=h_{i-T^*}^\top z_t$ for all $i>t>T^*$. 
By definition, $z_t$ is supported on its first $t-T^*$ coordinates, completing the first part of the claim. Next we claim that $z_t^\top h_{t-T^*}> z_t^\top h_{i-T^*}$ for all $T^*+1\leq i <t$. For $T^*+1\leq i<t$, one has
\begin{align*}
    z_t^\top (h_{t-T^*}-h_{i-T^*})=\sum_{j=i-T^*}^{t-T^*}z_{t,j}(h_{t-T^*,j}-h_{i-T^*,j})=z_{t,i}(a_i+1)+\sum_{j=i+1}^{t-1}z_{t,j}a_j>0.
\end{align*}

The two claims guarantee that $f(z_t)=H_{t-T^*}(z_t)$. Combining with the fact ${\cal I}(z_t)=\{t-T^*,...,d+1\}$, we conclude that the sub-gradient oracle outputs $h_{t-T^*}$.
\end{proof}

\begin{lemma}
For the function constructed in this section, the solution of $t$-th step in algorithm \ref{alg1} equals to $z_t$ for every $T^*<t\leq T+1$.
\end{lemma}

\begin{proof}
We prove this lemma by induction.
For base case $t=T^*+1$, we know that $z_t=-\eta_t h_1$ holds.
Thus,
\begin{align*}
    y_{t+1,j}=& z_{t,j}-\frac{1}{\sqrt{T}}h_{t-T^*,j}\\
             =& \left\{ \begin{array}{ll}
                 \frac{b_j}{\sqrt{T}}-a_j\frac{t-j-T^*-1}{\sqrt{T}} & (\text{ for }1\leq j<t-T^*)  \\
                 0 & (\text{ for } j\geq t-T^*)
             \end{array} \right\}
             -\frac{1}{\sqrt{T}}\left\{
\begin{array}{ll}
a_{j} & (\text { if } 1 \leq j<t-T^*) \\
-b_i & (\text { if } t-T^*=j \leq d) \\
0 & (\text { if } t-T^*<j \leq d)
\end{array}\right\}\\
    =& \left\{\begin{array}{ll}
        \frac{b_j}{\sqrt{T}}-a_j\frac{t-j-T^*-1}{\sqrt{T}} & (\text{ for } j<t-T^*)  \\
        \frac{b_t}{\sqrt{T}}=\frac{1}{2\sqrt{T}} & (\text{ for }j=t-T^*)\\
        0 & (\text{ for } j>t-T^*)
    \end{array}\right\}.
\end{align*}
So $y_{t+1}=z_{t+1}$. Since $z_{t+1}\in X$, we have that $x_{t+1}=z_{t+1}$.
\end{proof}

From the above claim we have that the vector $x_t$ in algorithm \ref{alg1} is equal to $z_t$ for $t\in [T+1]$, which allows determination of the value of the final iterate:
\begin{align*}
    f(x_{T+1})=f(z_{T+1})=H_{d+1}(z_{T+1})\geq \sum_{j=1}^{d}h_{d+1,j}z_{T+1,j}\geq \sum_{j=1}^{d} \frac{1}{8(d+1-j)}\frac{1}{4\sqrt{T}}>\frac{\log d}{32\sqrt{T}}.
\end{align*}
\end{proof}
\section{Evidence in one dimension}

\subsection{Assumptions}
Standard, we make the following assumptions ${\cal A}$ for running SGD:
\begin{itemize}
    \item The domain $X\subset \R$ is convex and bounded with diameter $D$\;
    \item The objective $f:X \rightarrow \R$ is convex and $G$-Lipschitz, not necessarily differentiable\;
    \item The output stochastic gradients are bounded: $|\hat{g}_t|\leq G$, and we have $\E[\hat{g}_t\mid \hat{g}_1,\cdots,\hat{g}_{t-1}]\in\partial f(x_t)$.
\end{itemize}

\begin{definition}
We define a set of good points by $\Scal$, which contains all ideal points:
\begin{align*}
    \Scal=\{x\in X:f(x)-f^*\leq \frac{GD}{\sqrt{T}}\}.
\end{align*}
\end{definition}

Moreover, we consider a special convex function family which we call Nearly Linear Function:
\begin{definition}[Nearly Linear Function]
We call a convex function $f:X\rightarrow \R$ nearly linear if it satisfies the following assumption ${\cal B}$:
\begin{itemize}
    \item For any $x_t\notin \Scal$, there exists constants $0<\epsilon\leq 1,0<c\leq 1$ such that
$\big|\E[\hat{g}_t\mid \hat{g}_1,\cdots,\hat{g}_{t-1}]\big|\in[c\epsilon G,\epsilon G]$.
\end{itemize}
\end{definition}

The family of nearly linear functions captures those functions whose sub-gradients do not change drastically outside the set of good points. The linear functions considered in \cite{koren2020open} lie in this family.

\subsection{Preliminary on Martingale}
We demonstrate some basic definitions and theorem with relationship to Martingale, which is used in the later proof.
\begin{definition}[Martingale]
A sequence $Y_1,Y_2,\cdots$ is said to be a martingale with respect to another sequence $X_1,X_2,\cdots$ if for all $n$:
\begin{itemize}
    \item $\mathbf{E}\left(\left|Y_{n}\right|\right)<\infty$
    \item $\mathbf{E}\left(Y_{n+1} \mid X_{1}, \ldots, X_{n}\right)=Y_{n}$.
\end{itemize}
\end{definition}

\begin{definition}[Martingale Difference]
Consider an adapted sequence $\{X_t,\F_t\}_{-\infty}^{\infty}$ on a probability space.
$X_t$ is a martingale difference sequence (MDS) if it satisfies the following two conditions for all $t$:
\begin{itemize}
    \item $\E|X_t|<\infty$
    \item $\E[X_t\mid \F_{t-1}]=0$, a.s.
\end{itemize}
\end{definition}

\begin{definition}[Stopping Time]
A stopping time with respect to a sequence of random variables $X_1,X_2,X_3,\cdots$ is a random variable $\tau$ with the property that for each $t$, the occurrence or non-occurrence of the event $\tau=t$ depends only on the values of $X_1,X_2,X_3,\cdots,X_t$.
\end{definition}

\begin{theorem}[Freedman's Inequality, Theorem 1.6 in~\cite{Fre75}]
\label{thm:FreedmanInequality}
Consider a real-valued martingale difference sequence $\{X_t\}_{t\geq 0}$ such that $X_0=0$, and $\E[X_{t+1}|\mathcal{F}_t]=0$ for all $t$, where $\{\mathcal{F}_t\}_{t\geq 0}$ is the filtration defined by the sequence. Assume that the sequence is uniformly bounded, i.e., $|X_t|\leq M$ almost surely for all $t$. Now define the predictable quadratic variation process of the martingale to be $W_t=\sum_{j=1}^t \E[X_j^2|\mathcal{F}_{j-1}]$ for all $t\geq 1$. Then for all $\ell \geq 0$ and $\sigma^2>0$ and any stopping time $\tau$, we have
\[
\Pr\Big[ \Big|\sum_{j=0}^\tau X_j \Big|\geq \ell \wedge W_\tau \leq \sigma^2 \text{for some stopping time } \tau \Big] \leq 2\exp\Big(- \frac{\ell^2/2}{\sigma^2+M \ell/3} \Big).
\]
\end{theorem}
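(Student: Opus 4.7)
The plan is to prove this via the standard exponential supermartingale method, which is the canonical route for Bernstein/Freedman-type inequalities; since the statement is classical (due to Freedman, 1975), the sketch below is a reconstruction of the textbook argument rather than something novel. First I would upper-bound the conditional moment generating function of each increment. Using the elementary inequality $e^{y} \le 1 + y + \frac{y^2/2}{1 - |y|/3}$ valid for $|y| < 3$, together with $1+u \le e^u$, one obtains that for any $\lambda \in (0, 3/M)$ and each $t$,
\[
\E\!\left[e^{\lambda X_t} \,\big|\, \F_{t-1}\right] \le \exp\!\left( \frac{\lambda^2/2}{1 - \lambda M/3}\, \E[X_t^2 \mid \F_{t-1}] \right),
\]
using $\E[X_t \mid \F_{t-1}]=0$ and $|X_t|\le M$. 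Write $g(\lambda) := \frac{\lambda^2/2}{1-\lambda M/3}$ for brevity.

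Next I would construct the auxiliary process $Z_t := \exp\!\big(\lambda \sum_{j=0}^t X_j - g(\lambda)\, W_t\big)$, with $Z_0 = 1$, and verify from the display above that $\E[Z_t \mid \F_{t-1}] \le Z_{t-1}$, so $\{Z_t\}$ is a non-negative supermartingale. For the stopped process at an arbitrary stopping time $\tau$, the optional stopping theorem gives $\E[Z_{\tau \wedge t}] \le 1$ for every deterministic $t$. The third step is to convert this into a tail bound. On the event $\{W_\tau \le \sigma^2\}$ we have $Z_\tau \ge \exp(\lambda S_\tau - g(\lambda)\sigma^2)$ where $S_\tau = \sum_{j=0}^\tau X_j$; combining with Markov's inequality and taking a union bound over the events $\{S_\tau \ge \ell\}$ and $\{-S_\tau \ge \ell\}$ (by running the argument also for $-X_t$) yields
\[
\Pr\!\left[ |S_\tau| \ge \ell \text{ and } W_\tau \le \sigma^2 \text{ for some stopping time } \tau \right] \le 2\exp\!\left(-\lambda \ell + g(\lambda) \sigma^2\right).
\]
To handle the ``for some stopping time $\tau$'' quantifier cleanly, one replaces $\tau$ by $\tau \wedge T$ and then lets $T\to\infty$, or equivalently defines $\tau$ as the first time the bad event occurs (capped at $+\infty$) and argues on the intersected event.

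Finally I would optimize over $\lambda$. Choosing $\lambda = \ell/(\sigma^2 + M\ell/3)$, which lies in $(0,3/M)$, a short calculation using $g(\lambda) \le \frac{\lambda^2/2}{1-\lambda M/3}$ gives
\[
-\lambda \ell + g(\lambda)\sigma^2 \le -\frac{\ell^2/2}{\sigma^2 + M\ell/3},
\]
and plugging back produces the claimed bound. The main obstacle in a fully rigorous write-up is not any single step but the interaction between the optional stopping theorem and the ``for some stopping time $\tau$'' clause: one has to argue that the event in question can be realized by a single stopping time (the first hitting time of the bad event) and that the supermartingale property survives stopping at that time, which is where one truncates by $\tau \wedge T$ and invokes monotone/dominated convergence to pass to $T\to\infty$. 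Everything else is routine convex calculus.
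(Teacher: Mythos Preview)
The paper does not prove this statement at all: it is quoted verbatim as Theorem~1.6 of \cite{Fre75} and then invoked as a black box in the subsequent analysis. There is therefore no ``paper's own proof'' to compare your proposal against.

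That said, your reconstruction is the standard exponential-supermartingale argument (conditional MGF bound, construct $Z_t=\exp(\lambda S_t - g(\lambda)W_t)$, optional stopping, Markov, optimize in $\lambda$), which is exactly how Freedman's inequality is typically established, and the sketch is correct in outline. One small cosmetic point: in the final optimization step you write ``using $g(\lambda)\le \frac{\lambda^2/2}{1-\lambda M/3}$,'' but that is how $g(\lambda)$ was defined, so the inequality is actually an equality; the bound $-\lambda\ell + g(\lambda)\sigma^2 \le -\frac{\ell^2/2}{\sigma^2+M\ell/3}$ at the chosen $\lambda$ still goes through after a line of algebra. The caveat you flag about the ``for some stopping time $\tau$'' clause is the right one to worry about, and your resolution (hit the bad event with a first-passage time, truncate by $\tau\wedge T$, pass to the limit) is the standard fix.
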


\subsection{Analysis}
In this subsection, we show how to improve the convergence of the last iterate of SGD with a fixed step size $\eta=\frac{4D}{G\sqrt{T}}$ in one dimension for nearly linear functions.

The proof mainly consists of two parts.
In the first part, we prove that for running SGD with fixed step size for any convex function satisfying Condition ${\cal A}$, with very high probability, the solution goes into the set of good points at least once.
In some sense, this is consistent to the known result that averaging scheme can achieve the optimal rate.
In the second part, we bound the tail probability of the sub-optimality of the last iterate for nearly linear functions, from which we can bound the expectation of the sub-optimality.
Roughly speaking, we consider the events that $f(x_T)-f^*\geq \frac{GDk}{\sqrt{T}}$ and the last $T-i$ steps all lie out the set of good points, and bound its probaility by $\exp(-\Omega(k/\epsilon+\epsilon(T-i)))$. And by Union Bound we know the tail probability that $\Pr[f(x_T)-f(^*)\geq \frac{GDk}{\sqrt{T}}]\leq \exp(-\Omega(k/\epsilon))/\epsilon$, which is enough to get the optimal rate $O(\frac{GD}{\sqrt{T}})$. 

\begin{lemma}
\label{lm:large_subgradient}
For any $x\in X\setminus\Scal,\forall \nabla f(x)\in\partial f(x)$, one has
\begin{align*}
    |\nabla f(x)|>\frac{G}{\sqrt{T}}.
\end{align*}
\end{lemma}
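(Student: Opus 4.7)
The plan is to prove this by a direct application of the subgradient inequality, combined with the definition of the set $\Scal$ and the bounded-diameter assumption on $X$.

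First, I would fix an arbitrary $x\in X\setminus \Scal$ and an arbitrary subgradient $g\in\partial f(x)$. Let $x^*\in\arg\min_{y\in X} f(y)$, so that $f(x^*)=f^*$. By convexity (the subgradient inequality applied at $x$ with $x'=x^*$), one has
\begin{equation*}
    f(x^*)-f(x)\ \geq\ g^{\top}(x^*-x),
\end{equation*}
which after rearrangement and using Cauchy--Schwarz (or, in one dimension, $|g^\top(x-x^*)|\leq |g|\cdot|x-x^*|$) gives
\begin{equation*}
    f(x)-f^*\ \leq\ g^{\top}(x-x^*)\ \leq\ |g|\cdot |x-x^*|\ \leq\ |g|\cdot D,
\end{equation*}
where the last inequality is because $x,x^*\in X$ and $X$ has diameter at most $D$ by Assumption ${\cal A}$.

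Next, I would invoke the definition of $\Scal$: since $x\notin \Scal$, we have $f(x)-f^*>GD/\sqrt{T}$. Chaining this with the bound above yields
\begin{equation*}
    \frac{GD}{\sqrt{T}}\ <\ f(x)-f^*\ \leq\ |g|\cdot D,
\end{equation*}
and dividing by $D>0$ produces exactly $|g|>G/\sqrt{T}$, as claimed. Since $g$ was an arbitrary element of $\partial f(x)$, the conclusion holds for every subgradient.

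There is essentially no obstacle here: the argument is a one-line consequence of the subgradient inequality together with the diameter bound. The only subtlety worth flagging is that the statement uses the notation $\nabla f(x)\in\partial f(x)$ to mean any subgradient (not a gradient in the classical sense, since $f$ need not be differentiable), so the proof must quantify over all $g\in\partial f(x)$ rather than relying on differentiability; my argument above handles this automatically.
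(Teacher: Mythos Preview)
Your proof is correct and is essentially the same as the paper's: both apply the subgradient inequality at $x$ with $x^*$ together with the diameter bound $|x-x^*|\le D$, the only cosmetic difference being that the paper phrases it as a proof by contradiction while you give the direct contrapositive.
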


\begin{proof}
We prove this statement by contradiction. 
Suppose there exists $x\in X\setminus \Scal$ such that $|\nabla f(x)|\leq \frac{G}{\sqrt{T}}$. 
By the convexity of $f$ and the definition of sub-gradient and let $x^*\in \K$ be a minimizer (arbitrarily if the minimizers are not unique), one has
\begin{align*}
    f(x^*)\geq f(x)+ \nabla f(x) (x-x^*),
\end{align*}
which implies that
\begin{align*}
    f(x)-f(x^*)\leq & \nabla f(x) (x^*-x)\leq \frac{GD}{\sqrt{T}}.
\end{align*}
This means $x\in \Scal$ and thus is a contradiction.
\end{proof}

Let $s_L=\inf_{x\in\Scal}x$ and $s_R=\sup_{x\in\Scal}x$.
\begin{definition}
We define the distance from a point to the set of good points as follows:
\begin{align*}
    \|x-\Scal\|=\left\{\begin{array}{ll}
n_{L}-x & \text { if } x<n_{L} \\
0 & \text { if } \quad n_{L} \leq x \leq n_{R} \\
x-n_{R} & \text { if } x>n_{R}
\end{array}\right.
\end{align*}
\end{definition}
Suppose we start from an arbitrary point $x_0\in X$ and the (random) sequence of the SGD algorithm with the fixed step size $\eta$ is denoted by $x_0,x_1,\cdots,x_T$, i.e. $x_{t+1}=\Pi_{X}(x_t-\eta\hat{g}_t)$.

\begin{lemma}
\label{lm:never_opt}
Given any $x_0\in X$.
Define $\tau_t:=\infty$ if SGD never goes back to the set of good points in the first $t$ steps, and $\tau_t:=\min_{i}\{0\leq i\leq t\mid x_{i}\in \Scal\}$ otherwise.
If $t\geq T$, we have that
\begin{align*}
    \Pr[\tau_t=\infty\mid x_0]\leq 2\exp(-\Omega(\sqrt{T})).
\end{align*}
\end{lemma}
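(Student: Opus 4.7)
The plan is to apply Freedman's inequality (Theorem~\ref{thm:FreedmanInequality}) to the martingale of stochastic-gradient noise, using Lemma~\ref{lm:large_subgradient} to convert the non-entry event into an atypically large deviation.

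\textbf{Setup.} Without loss of generality assume $x_0 \notin \Scal$ (else $\tau_t = 0$) and, by symmetry, that $x_0 > s_R$. Decompose $\hat{g}_r = g_r + \xi_r$ with $g_r = \E[\hat{g}_r \mid \F_{r-1}]$ and $\xi_r$ a bounded martingale difference ($|\xi_r| \leq 2G$), and set $M_s = \sum_{r<s} \xi_r$. In one dimension, convexity of $f$ forces any sub-gradient at $x > s_R$ to be non-negative, and combined with Lemma~\ref{lm:large_subgradient} this gives $g_r \geq G/\sqrt{T}$ whenever $x_r > s_R$.

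\textbf{From non-entry to a large deviation.} Consider first the event that the iterate stays above $s_R$ for all $s \leq t$; the symmetric case (below $s_L$) and the case where the walk ``jumps over'' $\Scal$ in a single step will be handled by the same estimate together with a union bound. Writing out the SGD recursion, and noting that the projection $\Pi_X$ can only pull the iterate closer to $\Scal$,
\begin{align*}
x_s - x_0 \;=\; -\eta \sum_{r<s} g_r \;-\; \eta M_s.
\end{align*}
Combining non-entry ($x_s > s_R$) with $x_0 - s_R \leq D$ and the drift bound $\sum_{r<s} g_r \geq sG/\sqrt{T}$ yields $M_s < (G\sqrt{T}/4)(1 - 4s/T)$; at $s = t \geq T$ this forces $M_t \leq -\Omega(G\sqrt{T})$.

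\textbf{Freedman and the main obstacle.} With $|\xi_r| \leq 2G$ and predictable quadratic variation $W_t \leq 4G^2 t$, Freedman's inequality supplies a tail bound on $\Pr[M_t \leq -\ell]$ for any threshold $\ell$. The main technical obstacle is that a single-shot application at $t = T$ with $\ell = \Theta(G\sqrt{T})$ only yields a constant probability bound, because the required deviation matches the martingale's natural scale $G\sqrt{T}$. To amplify to the claimed $\exp(-\Omega(\sqrt{T}))$ rate I would partition $[0,t]$ into $\Theta(\sqrt{T})$ consecutive sub-intervals and apply Freedman's inequality recursively: by the Markov property of SGD, conditional on non-entry in all previous sub-intervals, the probability of entering $\Scal$ during the current one is bounded below by a constant $p>0$ (using both the drift toward $\Scal$ and the fact that the maximum step size $\eta G = 4D/\sqrt{T}$ is on the order of the width of $\Scal$, which is at least $2D/\sqrt{T}$ by the $G$-Lipschitz property); multiplying these per-interval failure probabilities yields $(1-p)^{\Theta(\sqrt{T})} \leq \exp(-\Omega(\sqrt{T}))$, as claimed.
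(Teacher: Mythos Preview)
Your approach diverges from the paper's at exactly the point where you identify the obstacle. You correctly note that a single application of Freedman with the crude bound $W_t\le 4G^2 t$ yields only a constant; the paper, however, does not resort to a sub-interval argument. Instead it works with the increments $y_i=x_i-x_{i-1}$, sets $\tilde y_i=y_i-\E[y_i\mid\F_{i-1}]$, and couples the conditional variance to the conditional drift via $\E[\tilde y_i^{\,2}\mid\F_{i-1}]\le \eta G\,\bigl|\E[y_i\mid\F_{i-1}]\bigr|$. This gives $W_t\le \eta G\ell$ with $\ell=\bigl|\sum_i\E[y_i\mid\F_{i-1}]\bigr|$, and a \emph{single} application of Freedman with deviation $\ell-D$ then yields $\exp\bigl(-\Omega\bigl((\ell-D)^2/(\eta G\ell)\bigr)\bigr)$; since $\ell\ge 4D$ for $t\ge T$ and $\eta G=4D/\sqrt T$, this is $\exp(-\Omega(\sqrt T))$ directly.

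Your sub-interval fix has a genuine gap: the claimed uniform lower bound $p>0$ on the per-interval entry probability does not hold. With $\Theta(\sqrt T)$ intervals of length $\Theta(\sqrt T)$, the drift toward $\Scal$ accrued in one interval is only $\eta\cdot(G/\sqrt T)\cdot\sqrt T=4D/\sqrt T$, and the martingale fluctuation in position units has scale $\eta G\cdot T^{1/4}=\Theta(D/T^{1/4})$. Both are $o(D)$, so if the iterate starts at distance $\Theta(D)$ from $\Scal$ (which is permitted), hitting $\Scal$ within a single interval is an $\Omega(T^{1/4})$-standard-deviation event---probability $\exp(-\Omega(\sqrt T))$, not a constant. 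No choice of interval length rescues the multiplicative scheme: intervals long enough to guarantee constant entry probability leave only $O(1)$ of them, so the product is again merely a constant. Your width-of-$\Scal$ observation shows the walk cannot step over $\Scal$, but does nothing to bring the walk close. The missing ingredient is precisely the variance--drift coupling that lets the paper make one Freedman application suffice.
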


\begin{proof}
Without loss of generality, we consider the case where $x_i> n_R$ for all $0\leq i\leq t$. We define a random variable $y_i=x_i-x_{i-1}$ to capture the movement of the solution for $0\leq i\leq t$.
Conditioning on $\tau =\infty$, i.e. $x_i>n_R$ for all $0\leq i\leq t$, we have that $\E[y_i]\leq -\eta \frac{G}{\sqrt{T}}=-4D/T$ for $i\geq 1$ by Lemma~\ref{lm:large_subgradient} (the projection only makes the expectation smaller).
By standard arguments, let $\F_i$ be the filtration and $\Tilde{y}_i=y_i-\E[y_i\mid \F_{i-1}]$.
It is easy to verify that $\{\Tilde{y}_{i}\}$ is a martingale difference sequence: 
\begin{gather}
    \E[\Tilde{y}_i\mid \F_{i}]=\E[y_i\mid \F_i]-\E[y_i\mid\F_i]=0.\\
    \E[|\Tilde{y}_i|]\leq G\eta <\infty.
\end{gather}

Obviously, one has $|\Tilde{y}_i|\leq G\eta= \frac{4D}{\sqrt{T}}$ by the third line of Assumptions $\A$.
As a result, $\E[\Tilde{y}^2_i\mid \F_{i-1}]=\E[y_i^2\mid \F_{i-1}]-(\E[y_i\mid\F_{i-1}])^2\leq \E[y_i^2\mid \F_{i-1}]\leq \eta G\E[y_i\mid \F_{i-1}]$.
Hence, we get the estimation $W_t=\sum_{i=1}^t \E[\Tilde{y_i}^2\mid \F_{i-1} ]\leq \eta G\left| \sum_{i=1}^t \E[y_i\mid \F_{i-1}]\right|$.
Let $\ell:=|\sum_{i=1}^{t}\E[y_i\mid \F_{i-1}]|$. By the Freedman's Inequality, one has:
\begin{align*}
    \Pr[\tau_t=\infty\mid x_0]\leq & \Pr[\sum_{i=1}^{t}y_i\geq -D]\\
    \leq & \Pr[|\sum_{i=1}^t \Tilde{y_i}|\geq \ell-D\wedge W_t\leq \eta G\ell]\\
    \leq & 2\exp\big(-\frac{(\ell-D)^2}{\eta Gl+\frac{4G\eta}{3}\ell}\big).
\end{align*}
If $t\geq T$, we know that $\ell= |\sum_{i=1}^{t}\E[y_i\mid \F_{i-1}]|\geq 4Dt/T\geq 4D$ and the statement follows directly by elementary calculation.
\end{proof}

\begin{theorem}
For any function satisfies the assumptions ${\cal A}$ and ${\cal B}$, one has
\begin{align*}
    \E[f(x_T)-f^*]=O(\frac{GD}{\sqrt{T}}),
\end{align*}
where $f^*=\min_{x\in X}f(x)$.
\end{theorem}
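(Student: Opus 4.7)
The strategy is to bound $\Pr[f(x_T) - f^* \geq kGD/\sqrt T]$ for each integer $k \geq 1$ by decomposing over the value of $\tau^{*} := \max\{j \leq T : x_j \in \mathcal{S}\}$, the last time the iterate visits the set of good points, and then to integrate the resulting tail. The case where $\tau^{*}$ is undefined (SGD never visits $\mathcal{S}$) is immediately handled by Lemma~\ref{lm:never_opt}: this event has probability $2\exp(-\Omega(\sqrt T))$, so together with the deterministic bound $f(x_T) - f^* \leq GD$ coming from Lipschitzness and the diameter, it contributes $o(GD/\sqrt T)$ to the expectation. It remains, for each $0 \leq i \leq T$, to control the joint event $\{\tau^{*} = i,\ f(x_T) - f^* \geq kGD/\sqrt T\}$.

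On this event, $x_j \notin \mathcal{S}$ for all $i+1 \leq j \leq T$, and $G$-Lipschitzness combined with the definition of $\mathcal{S}$ forces $\|x_T - \mathcal{S}\| \geq (k-1)D/\sqrt T$. Assume without loss of generality that $x_{i+1},\ldots,x_T$ all lie to the right of $\mathcal{S}$, i.e.\ $x_j > s_R$; the other side is symmetric. I would then study the one-sided walk $y_j := x_{i+j} - s_R > 0$ for $1 \leq j \leq T-i$. By Assumption $\mathcal{B}$ the drift satisfies $\E[y_{j+1} - y_j \mid \mathcal{F}_{i+j}] \leq -\eta c \epsilon G = -4c\epsilon D/\sqrt T$, and by Assumption $\mathcal{A}$ the per-step increments are bounded by $\eta G = 4D/\sqrt T$ almost surely, while $y_1 \leq 4D/\sqrt T$ since $x_i \in \mathcal{S}$.

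The main technical step is to apply Freedman's inequality (Theorem~\ref{thm:FreedmanInequality}) to the martingale-difference sequence $\widetilde y_j := (y_{j+1} - y_j) - \E[y_{j+1} - y_j \mid \mathcal{F}_{i+j}]$, which satisfies $|\widetilde y_j| \leq 8D/\sqrt T$ and predictable quadratic variation $W_{T-i} \leq 16(T-i)D^2/T$. The event $y_{T-i} \geq \Omega(kD/\sqrt T)$, together with the drift pulling inward by $4c\epsilon(T-i)D/\sqrt T$, forces the martingale sum $\sum_j \widetilde y_j$ to exceed $\Omega((k+\epsilon(T-i))D/\sqrt T)$, so Freedman yields
\[
\Pr\!\left[\tau^{*} = i,\ f(x_T) - f^* \geq \tfrac{kGD}{\sqrt T}\right] \leq 2\exp\!\Big(-\Omega\big(\min(k,\ \epsilon^2(T-i))\big)\Big).
\]
The natural obstacle is that $\{\tau^{*} = i\}$ is not $\mathcal{F}_i$-measurable, since it constrains the entire future path. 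I would sidestep this by invoking Freedman with the stopping time equal to the first re-entry of $y_j$ into $\mathcal{S}$ (equivalently, $y_j \leq 0$), which is precisely the stopping-time formulation accommodated by Theorem~\ref{thm:FreedmanInequality}.

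To conclude, sum the tail bound over $i$, splitting into $T-i \geq k$ (where $\epsilon^2(T-i)$ dominates the min and the geometric sum contributes $(1/\epsilon^2)\exp(-\Omega(\epsilon^2 k))$) and $T-i < k$ (at most $k$ terms of probability $\exp(-\Omega(k))$ each). This yields $\Pr[f(x_T) - f^* \geq kGD/\sqrt T] \leq (1/\epsilon^2)\exp(-\Omega(\epsilon^2 k)) + k\exp(-\Omega(k))$, and integrating the tail gives
\[
\E[f(x_T) - f^*] \leq \frac{GD}{\sqrt T}\sum_{k \geq 1}\!\Big[(1/\epsilon^2)\exp(-\Omega(\epsilon^2 k)) + k\exp(-\Omega(k))\Big] = O\!\left(\frac{GD}{\sqrt T}\right),
\]
with the hidden constant depending polynomially on $1/(c\epsilon)$, completing the proof.
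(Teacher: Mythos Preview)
Your strategy is the paper's: decompose over the last visit time $\tau^*$ to $\mathcal S$, handle $\tau^*=\infty$ via Lemma~\ref{lm:never_opt}, and for each $\tau^*=i$ apply Freedman's inequality to the centered increments after time $i$, then sum and integrate the tail. Your remark about $\{\tau^*=i\}$ not being $\mathcal F_i$-measurable, and the fix via the stopping-time form of Freedman, is a point the paper leaves implicit.

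There is, however, an internal inconsistency in your Freedman step. You state the bound $\exp\bigl(-\Omega(\min(k,\epsilon^2(T-i)))\bigr)$ and then assert that for $T-i<k$ each term is $\exp(-\Omega(k))$. But since $\epsilon\le 1$, for $T-i<k$ one has $\epsilon^2(T-i)<k$, so $\min(k,\epsilon^2(T-i))=\epsilon^2(T-i)$, not $k$; and for $T-i>k/\epsilon^2$ the min equals $k$, giving up to $T$ terms of size $\exp(-\Omega(k))$, which does not sum. The exponent that actually falls out of your setup (deviation $\Omega((k+\epsilon(T-i))D/\sqrt T)$, variance $O((T-i)D^2/T)$, jumps $O(D/\sqrt T)$) is $\Omega(k)$ when $T-i\le k$ and $\Omega(\epsilon^2(T-i))$ when $T-i>k$: a case split, not a min. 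With that correction your summation goes through and the conclusion holds, with constants polynomial in $1/(c\epsilon)$.

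For comparison, the paper sharpens both ingredients: it converts the function gap to distance via the upper bound $|\nabla f|\le \epsilon G$ of Assumption~$\mathcal B$ (so $|x_T-\mathcal S|\ge (k-1)D/(\epsilon\sqrt T)$ rather than your $(k-1)D/\sqrt T$), and it bounds the quadratic variation by $W\le \eta G\,\bigl|\sum_j\E[y_j\mid\mathcal F_{j-1}]\bigr|$ instead of the crude $O((T-i)D^2/T)$. These two refinements together yield the cleaner exponent $\Omega\bigl((k-1)/\epsilon+c\epsilon(T-i)\bigr)$, hence a tail of order $(1/(c\epsilon))\exp(-\Omega(k/\epsilon))$ after summing over $i$, with a final constant linear rather than polynomial in $1/(c\epsilon)$.
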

\begin{proof}
We try to bound the tail probability, that is $\Pr[f(x_T)-f^*\geq \frac{GDk}{\sqrt{T}}]$ for any $k\geq 3$.

We define $t:=\infty$ if SGD never goes in the set $\Scal$ and
let $t:=\max_i\{0\leq i\leq T\mid x_i\in \Scal\}$ otherwise.
One has
\begin{align*}
    \Pr[f(x_T)-f^*\geq \frac{GDk}{\sqrt{T}}]=& \sum_{i=0}^{T}\Pr[f(x_T)-f^*\geq \frac{GDk}{\sqrt{T}}\wedge t=i]+\Pr[f(x_T)-f^*\geq \frac{GDk}{\sqrt{T}}\wedge t=\infty]\\
    =& \sum_{i=0}^{T-1}\Pr[f(x_T)-f^*\geq \frac{GDk}{\sqrt{T}}\wedge t=i]+\Pr[f(x_T)-f^*\geq \frac{GDk}{\sqrt{T}}\wedge t=\infty],
\end{align*}
where the second equality follows from the fact that $\Pr[f(x_T)-f^*\geq \frac{GDk}{\sqrt{T}}\wedge t=T]=0$ by the definition of $\Scal$ and $k\geq 3$.

By Lemma~\ref{lm:never_opt}, we have
\begin{align*}
    \Pr[f(x_T)-f^*\geq \frac{GDk}{\sqrt{T}}\wedge t=\infty]\leq  \Pr[ t=\infty]\leq 2\exp(-\Omega(\sqrt{T})),
\end{align*}
which is negligible.

Now we begin to bound $\Pr[f(x_T)-f^*\geq \frac{GDk}{\sqrt{T}}\wedge t=i]$.
Similarly we use $y_i=x_i-x_{i-1}$ to capture the movement of the solution and without loss of generality. We assume that $x_j>n_R$ for all $i<j\leq T$, and by Assumption ${\cal B}$ we have $\E[y_i]\in [-c\eta \epsilon G,-\eta\epsilon G]$.

Let $\F_i$ be the filtration and $\Tilde{y}_i=y_i-\E[y_i\mid \F_i]$.
 We know that $W_{(i,T]}=\sum_{j=i+1}^T \E[\Tilde{y_i}^2\mid \F_{i-1} ]\leq \eta G| \sum_{j=i+1}^T \E[y_i\mid \F_{i-1}]|$.
Let $\ell=|\sum_{j=i+1}^{T}\E[y_j\mid \F_{j-1}]|$.
It is obvious that $\ell\geq c\eta\epsilon G(T-i)$ by the Assumption ${\cal B}$.

Conditioning on $f(x_T)-f^*\geq \frac{GDk}{\sqrt{T}}\wedge t=i$, it follows that $|\sum_{j=i+1}^T y_i|\geq \frac{D(k-1)}{\epsilon\sqrt{T}}$.
More specifically, as $x_i\in\Scal$ and thus $f(x_i)-f^*\leq \frac{GD}{\sqrt{T}}$, we have that $f(x_T)-f(x_i)\geq \frac{GD(k-1)}{\sqrt{T}}$ and further $|x_T-x_i|=|\sum_{j=i+1}^{T}y_j|\geq \frac{D(k-1)}{\epsilon \sqrt{T}}$ by the Assumption ${\cal B}$.

Hence we have
\begin{align*}
    \Pr[f(x_T)-f^*\geq \frac{GDk}{\sqrt{T}}\wedge t=i]
    \leq & \Pr[|\sum_{j=i+1}^T y_j|\geq \frac{D(k-1)}{\epsilon\sqrt{T}} ]\\
    \leq & \Pr[|\sum_{j=i+1}^T \Tilde{y}_j|\geq  \frac{D(k-1)}{\epsilon\sqrt{T}}+\ell \wedge W_{(i:T]}\leq \eta G \ell  ] \\
    \leq & 2\exp\left(- \frac{(\frac{D(k-1)}{\epsilon\sqrt{T}}+\ell)^2/2}{4\eta G \ell/3}\right)\\
    \leq & 2\exp(-\frac{3}{16\eta G}(\frac{D(k-1)}{\epsilon\sqrt{T}}+\ell))\\
    \leq & 2\exp\left(-\frac{3}{16\eta G}(\frac{D(k-1)}{\epsilon\sqrt{T}}+c\eta G\epsilon(T-i))\right)\\
    = & 2\exp(-\frac{3}{64}\frac{k-1}{\epsilon}-\frac{3}{16}c\epsilon (T-i)).
\end{align*}

And further 
\begin{align*}
    \Pr[f(x_T)-f^*\geq \frac{GDk}{\sqrt{T}}]=& \sum_{i=0}^{T-1}\Pr[f(x_T)-f^*\geq \frac{GDk}{\sqrt{T}}\wedge t=i]+\Pr[f(x_T)-f^*\geq \frac{GDk}{\sqrt{T}}\wedge t=\infty]\\
    \leq & \sum_{i=0}^{T-1}2\exp(-\frac{3}{64}\frac{k-1}{\epsilon}-\frac{3}{16}c\epsilon (T-i))+2\exp(-\Omega(\sqrt{T}))\\
    \leq & \frac{32}{3c\epsilon}\exp(-\frac{3}{64}\frac{k-1}{\epsilon})+2\exp(-\Omega(\sqrt{T})),
\end{align*}
where the last step follows from the fact that for any constant $C>0$ one has $\sum_{i=1}^T\exp(-C i)\leq \int_{i=0}^{T-1}\exp(-C i)\mathrm{d} i\leq 1/C$.

As a result, we have that
\begin{equation}
    \Pr[f(x_T)-f^*\geq x]=O(\exp(-x\lambda))
\end{equation}
where $\lambda=\Theta(\frac{\sqrt{T}}{GD})$. Our conclusion follows from
\begin{equation}
    \E[f(x_T)-f^* ]=\int_{0}^{GD}\Pr[f(x_T)-f^*\geq x]\mathrm{d} x=O(\lambda)=O(\frac{GD}{\sqrt{T}})
\end{equation}
\end{proof}

\subsection{Stationary distribution of a more general random walk}
The one-dimensional special example considered in \cite{koren2020open} is essentially a discrete one-dimensional random walk. Specifically, \cite{koren2020open} considered the final iterate of SGD on the function $f(x)=\epsilon |x|$ with fixed step size $\eta$ and a restricted SGD oracle only outputting $\pm 1$. Linearity together with fixed step sizes implies that the point $x_t$ can only appear at locations of the form $z\eta$ where $z$ is any integer. This property makes reducing the problem to a random walk possible. Since a one dimensional discrete random walk is essentially defined by its transition probabilities, it's natural to further consider random walks with varying transition probabilities.

In this subsection, we relax the assumption on the transition probabilities of the random walk, corresponding to running SGD on a Lipschitz convex function instead of a linear one, with the same restricted oracle in \cite{koren2020open}. In particular, we consider any 1-Lipschitz convex function $f(x): [0,1]\to R$, where $f(0)=0$ is the unique minimum. The SGD oracle can only return $1,-1$ at $x\in [0,1]$, and the expectation of the output must be equal to a sub-gradient of $f$ at $x$. 
We further assume that either the output distribution of the SGD oracle at any point $x$ is identical whenever being queried which allows the using of Markov chains. This more general example corresponds to a discrete one-dimensional random walk with monotone probabilities of 'moving left' at each point. To proceed, we need to introduce several definitions about Markov chains from \cite{freedman2017convergence}.

\begin{defn}[Finite Markov Chain]
A finite Markov chain with finite state space $\Omega$ and transition matrix $P$ is a sequence of random variables $X_t$ where
\begin{equation}
    \P\{X_{t+1}=y|X_t=x\}=P(x,y)
\end{equation}
and $P(x,y)$ are all non-negative with $\sum_y P(x,y)=1$. We further denote $P^t(x,y)=\P\{X_t=y|X_0=x\}$.
\end{defn}

\begin{defn}[Stationary Distribution]
A distribution $\pi$ is called a stationary distribution of a Markov chain $P$ if $ \pi P=\pi$.
\end{defn}

\begin{defn}[Irreducible Markov Chain]
A Markov chain is irreducible if for all states $x,y\in \Omega$, there exists a $t\ge 0$ such that $P^t(x,y)>0$.
\end{defn}

\begin{defn}[Aperiodic Markov Chain]
Let $\tau(x)=\{t>0|P^t(x,x)>0\}$ be the set of all time steps for which a Markov chain can start and end in a state $x$, then the period of $x$ is $gcd \tau(x)$. An irreducible Markov chain is called aperiodic if $gcd \tau(x)=1$ for any $x\in \Omega$.
\end{defn}

\begin{proposition}[Fundamental Theorem of Markov Chains]\label{prop1}
If a Markov chain $P$ is irreducible and aperiodic then it has a unique stationary distribution $\pi$.
\end{proposition}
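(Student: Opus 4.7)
The plan is to prove existence and uniqueness separately, since the two properties call for rather different arguments. For existence, the key observation is that the set of probability distributions on $\Omega$, viewed as a subset of $\R^{|\Omega|}$, is compact and convex, and the linear map $\pi \mapsto \pi P$ sends this simplex into itself because $P$ is row-stochastic. Brouwer's fixed-point theorem then hands us a distribution $\pi^*$ with $\pi^* P = \pi^*$. (Irreducibility and aperiodicity are not needed for this half; they come in only to rule out multiple fixed points.)

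For uniqueness, I would first establish the following intermediate claim: if $P$ is irreducible and aperiodic on a finite state space, then there exists an integer $N$ and a constant $\epsilon > 0$ such that $P^N(x,y) \ge \epsilon$ for every $x,y \in \Omega$. This is the one genuinely nontrivial step. Aperiodicity says that for each $x$ the set $\tau(x) = \{t > 0 : P^t(x,x) > 0\}$ has gcd $1$; a standard number-theoretic fact (the Chicken McNugget / Schur-type statement: any numerical semigroup with gcd $1$ contains all sufficiently large integers) then implies $\tau(x)$ contains every integer beyond some threshold. Combined with irreducibility, which gives some $t_{x,y}$ with $P^{t_{x,y}}(x,y) > 0$ for each pair, and using finiteness of $\Omega$, a uniform $N$ works for all pairs at once.

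Given the claim, I would run a standard coupling argument. Take two copies $(X_t)_{t\ge 0}$ and $(Y_t)_{t\ge 0}$ of the chain, started from arbitrary distributions $\mu_1$ and $\mu_2$, and couple them so that at each block of $N$ steps they have probability at least $\epsilon$ of coalescing (using the uniform lower bound $P^N(\cdot,\cdot) \ge \epsilon$); once coalesced, they move together. Then the coupling time $\tau$ satisfies $\Pr[\tau > kN] \le (1-\epsilon)^k$, so
\begin{equation*}
\|\mu_1 P^t - \mu_2 P^t\|_{\mathrm{TV}} \le \Pr[\tau > t] \xrightarrow{t\to\infty} 0.
\end{equation*}
If $\pi_1$ and $\pi_2$ are both stationary, then $\pi_i P^t = \pi_i$ for every $t$, and applying the bound above with $\mu_i = \pi_i$ forces $\pi_1 = \pi_2$, yielding uniqueness.

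The main obstacle is the uniform lower bound $P^N(x,y) \ge \epsilon$: existence of a Brouwer fixed point and the coupling bound are both routine, but extracting a single $N$ that works simultaneously for all pairs $(x,y)$ requires carefully combining irreducibility (pairwise reachability) with the aperiodicity-driven gcd argument (eventual containment of all large integers in $\tau(x)$), and then exploiting the finiteness of $\Omega$ to take the maximum threshold. Once that uniform constant is in hand, everything else falls out of the standard coupling/contraction toolkit.
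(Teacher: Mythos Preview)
Your argument is correct and follows one of the standard textbook routes to the Fundamental Theorem of Markov Chains: Brouwer for existence, then the uniform minorization $P^N(x,y)\ge\epsilon$ combined with a coupling/contraction argument for uniqueness. The steps you flag as delicate (extracting a single $N$ from the gcd-$1$ semigroup fact together with irreducibility and finiteness) are exactly where the work lies, and your sketch handles them properly.

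However, note that the paper does not actually prove this proposition at all: it is stated as background material, attributed to \cite{freedman2017convergence}, and simply invoked later when checking that the induced random-walk transition matrix has a unique stationary distribution. So there is no ``paper's own proof'' to compare against; you have supplied a proof where the authors were content to cite one.
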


Without loss of generality we assume $\sqrt{T}$ is an integer and denote $n=\sqrt{T}$, so the point can only move within the set $\{0,1/n,...,1\}$ when we run the restricted SGD on $f$. We have the following upper bound for the stationary distribution of the (induced) random walk.

\begin{theorem}
Under the above assumptions, for any 1-Lipschitz convex function $f(x): [0,1]\to R$ whose minimizer is $f(0)=0$. When we run SGD on $f$ whose oracle can only output $\pm 1$, with initial distribution supported on $\{0,1/n,...,1\}$ and step size $1/\sqrt{T}$, the asymptotic sub-optimality is $O(1/\sqrt{T})$.
\end{theorem}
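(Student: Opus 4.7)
Under the time-homogeneity assumption, the iterates form a birth--death Markov chain on the grid $\{0,1/n,\dots,1\}$ with $n=\sqrt{T}$. At state $i/n$ the restricted oracle outputs $-1$ with some fixed probability $q_i$ and $+1$ with probability $1-q_i$, so that its expectation $g_i := 2q_i-1$ lies in $\partial f(i/n)$. Because $\eta=1/n$, the chain moves one grid-step right with probability $p_i^+ = q_i$ that I will rewrite (after noting $g_i\in[0,1]$ from convexity with minimum $0$ and $1$-Lipschitzness) as $p_i^+=(1-g_i)/2$, and left with probability $p_i^-=(1+g_i)/2$, with projection implementing reflection at the two endpoints. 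Since $g_i\geq 0$, the chain has a positive self-loop probability at $0$ (the attempted $-1/n$ move is projected back), which together with the two-step returns from any interior state yields aperiodicity; irreducibility on the support of the initial distribution follows because $p_i^+>0$ unless $g_i=1$, and in the degenerate case $g_i\equiv 1$ the chain collapses to $\delta_0$ and the claim is trivial. Hence by Proposition~\ref{prop1} a unique stationary distribution $\pi$ exists.

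The plan next is to use detailed balance, which for this birth--death chain gives the explicit ratio
\[
\frac{\pi_{i+1}}{\pi_i} \;=\; \frac{p_i^+}{p_{i+1}^-} \;=\; \frac{1-g_i}{1+g_{i+1}}.
\]
Because $f$ is convex, the selection $g_i$ is non-decreasing in $i$, so for every $i\geq j$ we have the pointwise bound $(1-g_i)/(1+g_{i+1})\leq (1-g_j)/(1+g_j)=:r_j$. Iterating, $\pi_i\leq \pi_j r_j^{i-j}$ for all $i\geq j$, and summing the geometric tail,
\[
\bar\pi_j \;:=\; \sum_{i\geq j}\pi_i \;\leq\; \frac{\pi_j}{1-r_j} \;=\; \frac{\pi_j(1+g_j)}{2g_j}.
\]
Multiplying through, this yields the clean inequality $g_j\bar\pi_j \leq \pi_j(1+g_j)/2 \leq \pi_j$, which is also trivially valid when $g_j=0$.

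Finally, I would exploit convexity to linearize $f$ along the grid: since $f(0)=0$ and $g_j\in \partial f(j/n)$ is a sub-gradient, for any $i$ one has
\[
f(i/n) \;=\; \sum_{j=0}^{i-1}\bigl(f((j{+}1)/n)-f(j/n)\bigr) \;\leq\; \frac{1}{n}\sum_{j=1}^{i} g_j.
\]
Plugging this into the expectation and swapping the order of summation,
\[
\mathbb{E}_\pi[f(X)] \;\leq\; \frac{1}{n}\sum_{i=0}^{n}\pi_i \sum_{j=1}^{i}g_j \;=\; \frac{1}{n}\sum_{j=1}^{n} g_j\bar\pi_j \;\leq\; \frac{1}{n}\sum_{j=1}^{n}\pi_j \;\leq\; \frac{1}{n} \;=\; \frac{1}{\sqrt{T}},
\]
which gives the advertised asymptotic bound $O(1/\sqrt{T})$.

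The main obstacle I anticipate is step~(2) above: verifying that the stationary-distribution ratio telescopes correctly under the boundary projection and dealing with flat regions where $g_j=0$ (which would naively make the geometric-tail bound blow up) as well as the degenerate regime $g_j=1$ (where the chain loses irreducibility). Both difficulties are handled by observing that the inequality $g_j\bar\pi_j\leq \pi_j$ is insensitive to $g_j=0$, and that $g_j=1$ forces the chain into a smaller effective state space on which the same analysis applies verbatim. Everything else is a straightforward birth--death computation combined with the standard convexity inequality for $f$.
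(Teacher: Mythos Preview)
Your argument is correct (modulo a harmless sign slip: the oracle's expectation is $1-2q_i$, not $2q_i-1$, but all your subsequent formulas are consistent with the correct sign). It is also genuinely different from, and cleaner than, the paper's proof. Both proofs start from the same birth--death stationary ratio $\pi_{i+1}/\pi_i=(1-g_i)/(1+g_{i+1})$ and the same convexity bound $f(i/n)\le \tfrac{1}{n}\sum_{j\le i}g_j$. From there the paper splits into cases according to whether $\sum_j g_j<1/2$; in the heavy case it locates the first index $N$ where the partial sum exceeds $1/2$, uses the estimate $\log\frac{1-g_j}{1+g_j}\le -g_j$ and an integral bound on $xe^{-x}$ to control the tail, and separately argues $Np_0\le 3$, ending with the constant $2+24e$. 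You instead exploit monotonicity of $g_j$ directly to dominate each tail $\bar\pi_j$ by a geometric series with ratio $(1-g_j)/(1+g_j)$, which yields the single pointwise inequality $g_j\bar\pi_j\le\pi_j$; after swapping the order of summation this gives $\mathbb{E}_\pi[f]\le 1/\sqrt{T}$ with constant $1$. Your route eliminates the case split and the integral estimate entirely, and the key inequality $g_j\bar\pi_j\le\pi_j$ is automatically robust to the edge cases $g_j\in\{0,1\}$ that you flagged as potential obstacles.
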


\begin{proof}
We denote $p_i(t)$ to be the probability of the point at location $i/n$ at time $t$, and $a_i$ to be the probability of SGD outputing $1$ at location $i/n$. It's obvious this random process is a finite Markov chain, with the following transition matrix:

$$
A=\begin{bmatrix}
a_0&1-a_0&0&\dots & 0\\
a_1&0&1-a_1&\dots & 0\\
0&a_2&0&\dots & 0\\
\vdots& & & \ddots  & \vdots\\
&& & a_n& 1-a_n
\end{bmatrix}_{(n+1) \times (n+1)}
$$

It's easy to verify that the transition matrix is irreducible and aperiodic by observing that endpoints $\{0,1\}$ have positive probability to stay still, therefore having a unique stationary by proposition \ref{prop1}. Denote $p:=(p_0,...,p_n)$ to be the stationary, so that $p A=p$. By straightforward calculation we find that
\begin{align*}
    &p_1=\frac{1-a_0}{a_1}p_0\\
    &p_2=\frac{(1-a_0)(1-a_1)}{a_1 a_2}p_0\\
    &p_3=\frac{(1-a_0)(1-a_1)(1-a_2)}{a_1 a_2 a_3}p_0\\
    &\cdots
\end{align*}
This motivates the guess of solution $p_i=\frac{\prod_{j=0}^{i-1}(1-a_j)}{\prod_{j=1}^i a_j} p_0$, which is easily verified by induction on index $i$. The convexity nature of $f$ and the assumption that $f(0)=0$ is the unique minimum imply that $1/2\le a_0\le...\le a_n$. We would like to estimate the loss of the stationary. Denote $b_i:=2a_i-1\in [0,1]$, which is a sub-gradient at point $i/n$. By convexity we have that $f(i/n)\le \frac{1}{n}\sum_{j=0}^i b_j$, and we would like to show that
\begin{equation}
    \sum_{i=0}^n p_i \sum_{j=0}^i b_j=O(1)
\end{equation}
We discuss two possible cases. If $\sum_{i=0}^n b_i<1/2$, the above inequality is trivial as $\sum_{i=0}^n p_i=1$. If not, there exists a smallest $N\le n$ such that $\sum_{i=0}^N b_i\ge 1/2$. Because $\sum_{i=0}^N b_i\le 2$ and $\sum_{i=0}^n p_i=1$, we can control the sum of the first $N+1$ terms 
\begin{equation}
    \sum_{i=0}^N p_i \sum_{j=0}^i b_j\le 2\sum_{i=0}^N p_i\le 2
\end{equation}
For the rest part, we upper bound $p_i$ as 
\begin{equation}
    p_i=p_0 \prod_{j=0}^{i-1} \frac{1-b_j}{1+b_{j+1}}   \le p_0\prod_{j=0}^{i-1} \frac{1-b_j}{1+b_j} 
\end{equation}
and further by $\log \frac{1-b_j}{1+b_j} \le -b_j$
\begin{equation}
    p_i \sum_{j=0}^i b_j\le  p_0 e^{-\sum_{j=0}^{i-1} b_j} \sum_{j=0}^i b_j \le  p_0 e^{1-\sum_{j=0}^i b_j} \sum_{j=0}^i b_j
\end{equation}
Notice that $b_i\ge \frac{1}{2(N+1)}$ for any $i>N$ by its monotony, we have that
\begin{equation}
    \sum_{i=0}^N e^{-\sum_{j=0}^i b_j} \sum_{j=0}^i b_j\le 2(N+1) \int_0^{\infty} e^{-x} x dx + 4(N+1)\max_x e^{-x}x=2(N+1)(1+\frac{2}{e})
\end{equation}
and further
\begin{equation}
    \sum_{i=N+1}^n p_i \sum_{j=0}^i b_j\le 2e p_0 (N+1)(1+\frac{2}{e})
\end{equation}
Recall that $\sum_{i=0}^{N-1}b_i<1/2$, for any $i<N$, we have the following control by concavity of $g(x)=\frac{1-x}{1+x}$
\begin{equation}
    p_i\ge p_0\prod_{j=0}^i \frac{1-b_j}{1+b_j} \ge \frac{p_0}{3}
\end{equation}
This implies that $\frac{N p_0}{3}\le 1$ and $\sum_{i=N+1}^n p_i \sum_{j=0}^i b_j\le 24e$. Combing results for both cases, we conclude that the loss of stationary is $O(1/\sqrt{T})$:
\begin{equation}
    \sum_{i=0}^n p_i f(i/n)\le \frac{1}{n} \sum_{i=0}^n p_i \sum_{j=0}^i b_j\le \frac{2+24e}{\sqrt{T}}
\end{equation}
So far, we have shown that the stationary has optimal rate $O(1/\sqrt{T})$.
\end{proof}
\section{Conclusion}
In this paper, we analyze the convergence rate of the final iterate of SGD running on non-smooth strongly convex/ lipschitz convex functions, when the dimension $d$ is seen as a relevant parameter. We prove $\Omega(\log d/\sqrt{T})$ and $\Omega(\log d/T)$ lower bounds for the sub-optimality of SGD minimizing non-smooth general convex and strongly convex functions respectively with standard step size schedules. We also prove a tight $O(1/\sqrt{T})$ upper bound for one-dimensional (nearly) linear functions, a more general setting than \cite{koren2020open}. This work is the first, to the best of our knowledge, that characterizes the dependence on dimension in the general $d\le T$ setting, advancing our knowledge on the final iterate convergence of SGD. Our results reveal a surprising advantage of the running average schedule, that it enjoys dimension-free convergence rate while the final iterate still (slightly) suffers from the curse of dimensionality. Our general lower bounds together with the upper bounds for one-dimensional special cases suggest that the right rate is $\Theta(\log d/\sqrt{T})$. We leave dimension-dependent upper bounds for future works.

\bibliography{Xbib}
\bibliographystyle{plainnat}

\end{document}